\definecolor{darkgreen}{rgb}{0,0.5,0}
\definecolor{darkblue}{rgb}{0.1,0.1,0.9}
\definecolor{darkred}{rgb}{0.9,0.1,0.1}
\newtheorem{theorem}{Theorem}
\newtheorem{proposition}[theorem]{Proposition}
\newtheorem{lemma}[theorem]{Lemma}
\theoremstyle{definition}
\newtheorem{remark}[theorem]{Remark}
\newcommand{\pref}[1]{Proposition~\ref{p.#1}}
\newcommand{\cref}[1]{Corollary~\ref{c.#1}}
\newcommand{\sref}[1]{Section~\ref{s.#1}}
\newcommand{\ssref}[1]{Subsection~\ref{ss.#1}}
\numberwithin{equation}{section}
\numberwithin{theorem}{section}
\newcommand{\Z}{\mathbb{Z}}
\newcommand{\N}{\mathbb{N}}
\newcommand{\R}{\mathbb{R}}
\newcommand{\Zd}{\mathbb{Z}^d}
\newcommand{\Rd}{\mathbb{R}^d}
\newcommand{\ep}{\varepsilon}
\renewcommand{\a}{\mathbf{a}}
\renewcommand{\fint}{\strokedint}
\DeclareMathOperator*{\osc}{osc}
\renewcommand{\bar}{\overline}
\renewcommand{\tilde}{\widetilde}
\DeclareMathOperator{\data}{data}
\newcommand{\diff}{\Delta}
\begin{document}

\title[Bounded correctors in almost periodic homogenization]{Bounded correctors in almost periodic homogenization}

\begin{abstract}
We show that certain linear elliptic equations (and systems) in divergence form with almost periodic coefficients have bounded, almost periodic correctors. This is proved under a new condition we introduce which quantifies the almost periodic assumption and includes (but is not restricted to) the class of smooth, quasiperiodic coefficient fields which satisfy a Diophantine-type condition previously considered by Kozlov~\cite{K}. The proof is based on a quantitative ergodic theorem for almost periodic functions combined with the new regularity theory recently introduced by the first author and Shen~\cite{ASh} for equations with almost periodic coefficients. This yields control on spatial averages of the gradient of the corrector, which is converted into estimates on the size of the corrector itself via a multiscale Poincar\'e-type inequality.
\end{abstract}

\author[S. Armstrong]{Scott Armstrong}
\address[S. Armstrong]{Universit\'e Paris-Dauphine, PSL Research University, CNRS, UMR [7534], CEREMADE, 75016 Paris, France}
\email{armstrong@ceremade.dauphine.fr}

\author[A. Gloria]{Antoine Gloria}
\address[A. Gloria]{D\'epartement de math\'ematique, Universit\'e Libre de Bruxelles, Belgium and project-team MEPHYSTO, Inria Lille-Nord Europe, France}
\email{agloria@ulb.ac.be}

\author[T. Kuusi]{Tuomo Kuusi}
\address[T. Kuusi]{Department of Mathematics and Systems Analysis, Aalto University, Finland}
 \email{tuomo.kuusi@aalto.fi}

\keywords{}
\subjclass[2010]{}
\date{\today}

\maketitle


\section{Introduction}
\label{s.introduction}
\subsection{Motivation and informal summary of results}
We consider uniformly elliptic equations with almost periodic coefficients, taking the form
\begin{equation} \label{e.pde}
-\nabla \cdot \left( \a\left(x\right) \nabla u \right) = 0 \qquad \mbox{in} \ U\subseteq \Rd.
\end{equation}
The coefficient field $\a:\Rd \to \R^{d\times d}$ is assumed to satisfy, for every $x,\xi\in\Rd$,
\begin{equation} \label{e.ue}
\xi\cdot \a(x) \xi \geq \left| \xi \right|^2 \quad \mbox{and} \quad \left| \a(x)\right| \leq \Lambda,
\end{equation}
where $\Lambda \geq 1$ is a given parameter. It is also assumed to be \emph{uniformly almost periodic}, that is,
\begin{equation} \label{e.ap}
\lim_{R\to \infty} \rho_1(\a,R) = 0,
\end{equation}
where here and throughout the paper we denote, for each $R>0$ and bounded, continuous function $f:\Rd \to \R$,
\begin{equation} \label{e.rho}
\rho_1(f,R):= \sup_{y\in\Rd} \inf_{z\in B_R} \sup_{x\in\Rd} \left| f(x+y) - f(x+z) \right|.
\end{equation}
Uniform almost periodicity in the sense above is equivalent to limits (in $L^\infty(\Rd)$) of sequences of trigonometric polynomials (see~\cite{B} or~\cite{JKO}). Notice that an $L$-periodic function $f$ satisfies $\rho_1(f,R)=0$ for every $R \geq L\sqrt{d}$. 

\smallskip

The study of~\eqref{e.pde} when $U$ is a very large domain falls into the realm of homogenization. 
 This paper is focused on the question of the boundedness of correctors, a central topic in homogenization and closely related to the issue of obtaining rates of convergence for homogenization. Our main result asserts the existence, for each unit vector $e\in\partial B_1$, of a bounded (and hence almost periodic) solution $\phi$ of the equation
\begin{equation*} \label{}
-\nabla \cdot \left( \a(x) \left( e + \nabla \phi (x) \right) \right) = 0 \quad \mbox{in} \ \Rd
\end{equation*}
under certain quantitative ergodic assumptions on $\a(\cdot)$ stated below.

\smallskip

This paper contains the first improvement of the results of Kozlov~\cite{K}, now nearly four decades old yet still unsurpassed, who proved the existence of bounded correctors for \emph{quasiperiodic} coefficients satisfying certain smoothness and non-resonance (Diophantine) conditions. Kozlov's work was still the only positive result on the existence of bounded correctors outside of the periodic setting (where the result is of course trivial). He lifted the corrector equation to a sub-elliptic problem on the higher dimensional torus which he solved thanks to a higher-order Poincar\'e inequality implied by the Diophantine condition. His ideas have been used in similar contexts in homogenization, see for example~\cite{GVM1}.

\smallskip

Our method is very different from that of Kozlov and applies to more general coefficient fields. 
In particular, in addition to recovering his result, we are able to identify the first class of almost periodic coefficients which are not quasiperiodic but for which bounded correctors exist. 
Our approach is inspired by recent advances on corresponding problems in stochastic homogenization, which we quickly describe now.

\smallskip

In a series of papers, the second author and Otto~\cite{GO11,GO14} and the second author, Neukamm and Otto~\cite{GNO1} proved new estimates on the (approximate) correctors under some strong mixing assumptions on the random coefficient field $\a(\cdot)$. Their strategy can be roughly summarized in three steps:

\begin{enumerate}
\item[(I)]  Introduction of a differential calculus with respect to the coefficient field $\a$, which we informally denote by $\partial_\a$, which possesses two important properties: (i) $\partial_\a$ quantifies ergodicity and (ii) the differential calculus is compatible with the PDE in the sense that if $u$ is a solution of the PDE with coefficients $\a$, then $\partial_\a u$ is itself solution of a similar PDE.

\smallskip

\item[(II)]  Prove higher-regularity results for the approximate correctors $\phi_\ep$ (which are defined by~\eqref{e.appcorrectors} below). Roughly what is needed is an $L^\infty$ bound on $\nabla \phi_\ep$. This makes it possible to transfer ergodic information from the coefficient field to the gradients of the solutions. 

\smallskip

\item[(III)] From (I) and (II), deduce bounds on spatial averages of the gradients of the approximate correctors and then ``integrate" these to obtain bounds on the oscillation of the approximate correctors themselves. 

\end{enumerate}

In~\cite{GO11,GNO1,GO14}, the differential calculus is based on Glauber derivatives and the ergodicity properties are quantified by functional inequalities on the probability space such as the spectral gap or logarithmic Sobolev inequalities. Higher regularity estimates for the approximate correctors are proved using this differential calculus and exploiting the specific structure of the corrector equation, while the final step is obtained by combining~(I) and~(II) and using estimates on Green's functions.

\smallskip

The fundamental difference between the random setting treated in the papers mentioned above and the almost periodic setting is the origin of cancellations: in the random setting, cancellations occur due to the \emph{decorrelation} properties of $\a$ at large distances, whereas in the almost periodic setting, they occur due to the \emph{correlation} properties of $\a$ at large distances. Nevertheless, our strategy of proof in the almost periodic setting follows these three steps. 

\smallskip

In the case of almost periodic coefficients, the role of~$\partial_\a$ is played by a ``difference operator" which we denote by~$\diff_{yz}$ and which measures the sensitivity of functions~$X$ of the almost periodic coefficient field~$\a(\cdot)$ with respect to translations of~$\a(\cdot)$ and therefore monitors the long-range correlations of~$X$. The difference operator~$\Delta_{yz}$ and the associated (standard) quantification~$\rho_1$ of almost periodicity (cf.~\eqref{e.rho}) is too coarse of a measure of ergodicity for what we need. To refine the quantification of ergodicity in the almost setting, we iterate the difference operator~$\Delta_{yz}$ to any order, see~\eqref{e.iterate-diff} below (in the spirit of the Hoeffding decomposition in sensitivity analysis). This gives rise to a new modulus (which we call~$\rho_*(\a,\cdot)$, based on a family $\rho_k(\a,\cdot)$, see~\eqref{e.defrhostar} and~\eqref{e.defrhok}) which  quantifies the almost periodicity of~$\a(\cdot)$ and controls the rate of convergence in the ergodic theorem (see~\pref{ergodicthm} below). The difference operator~$\Delta_{yz}$ and its iterates define a suitable calculus for PDEs with almost periodic coefficients. In particular, the function~$\Delta_{yz} \phi_\ep$ is a solution of an equation with almost periodic coefficients, see~\eqref{e.Delta-phi}.


\smallskip

As far as step (II) is concerned, a regularity theory has already been proved by Shen and the first author in~\cite{ASh} (see~\pref{lipschitz} below). This is an extension of the one developed in the periodic case by Avellaneda and Lin~\cite{AL1,AL2}. The method of proof in~\cite{ASh} was however different and also based on recent advances in stochastic homogenization, in particular the new quantitative arguments for Lipschitz regularity originating in~\cite{AS} and further developed in~\cite{AM,GNO2}. See also the earlier works~\cite{MO,S} which developed a H\"older regularity theory. 

\smallskip

The combination of the Lipschitz regularity theory and the difference calculus of step (I) allows us to transfer the quantitative almost periodic assumptions from $\a$ to $\nabla \phi_\ep$ in the form of \eqref{e.modcontrol}. An application of the quantitative ergodic theorem (\pref{ergodicthm}) yields control of spatial averages of~$\nabla \phi_\ep$ and this is integrated using a multiscale Poincar\'e inequality (\pref{heatfuckingflow}) to give bounds on the oscillation of $\phi_\ep$. If the rate in the ergodic theorem is sufficient, then we deduce that $\osc_{\Rd} \phi_\ep$ is bounded uniformly in $\ep\in(0,1]$ and thus the existence of a bounded corrector by sending $\ep \to 0$. 

\smallskip

This string of arguments therefore gives a sufficient condition for the existence of a bounded corrector in terms of the decay of the modulus $\rho_*(\a,\cdot)$ of the coefficients. To apply this theory to Kozlov's class of quasiperiodic coefficients, for example, it is necessary only to check that the Diophantine condition (and sufficient smoothness) implies that $\rho_*(\a,\cdot)$ decays sufficiently fast. Indeed it does, as we show in Section~\ref{s.examples}, where we actually obtain the results for a strictly larger class of almost periodic coefficient fields.

\smallskip

We remark that, while we use scalar notation throughout the paper, the arguments work essentially verbatim in the case of systems.

\subsection{Quantitative almost periodicity}
\label{ss.almostperiodic}
In this subsection we introduce the quantitative almost periodic conditions under which we prove our main results. We begin with some notation. 
Given $f:\Rd \to \R^k$ and $y,z\in\Rd$, we define
\begin{equation*}
T_zf(x):= f(x+z)
\end{equation*}
and the difference operator
\begin{equation}
\label{e.diff-op}
\diff_{yz}f(x):= \frac12 \left( T_yf(x) - T_zf(x)\right) = \frac12 \left(f(x+y) - f(x+z)\right).
\end{equation}
Then the quantity $\rho_1(f,R)$ defined in the previous section can be written as
\begin{equation*}
\rho_1(f,R)= 2 \sup_{y\in\Rd} \inf_{z\in B_R} \left\| \diff_{yz} f \right\|_{L^\infty(\Rd)}.
\end{equation*}
This quantity is a natural way to quantify the almost periodic assumption (at least for uniformly almost periodic functions) and it has been used already for example in~\cite{ACS,S} to obtain quantitative estimates in almost periodic homogenization.

\smallskip

Our purposes require a more refined quantitative measurement of almost periodicity. The reason is that $\rho_1(\a,R)$ is bounded below by $cR^{-1}$, even in the best (non-periodic) situations such as the quasiperiodic case with a Diophantine condition (see Section~\ref{s.examples} below). Unfortunately, the assumption $\rho_1(\a,R) \lesssim R^{-1}$ does not imply the existence of bounded correctors. This motivates us to consider higher-order versions of $\rho_1$, which are defined in the  following way, where we use $\| \cdot\| = \| \cdot \|_{L^\infty(\Rd)}$ for brevity:
\begin{align*}
 \rho_2(f,R) 
& := \sup_{y_1\in\Rd} \inf_{z_1\in B_R} \sup_{y_2\in\Rd} \inf_{z_2\in B_R} \max \left\{ \left\|\diff_{y_2z_2} \diff_{y_1z_1} f \right\| ,\, \left\|\diff_{y_1z_1}f \right\| \left\| \diff_{y_2 z_2} f \right\|   \right\},
\\
 \rho_3(f,R)
& :=  \sup_{y_1\in\Rd} \inf_{z_1\in B_R} \sup_{y_2\in\Rd} \inf_{z_2\in B_R}\sup_{y_3\in\Rd} \inf_{z_3\in B_R} \\
& \qquad \max \big\{ 
\left\|\diff_{y_3z_3} \diff_{y_2z_2} \diff_{y_1z_1} f \right\|,\, \left\|\diff_{y_2z_2} \diff_{y_1z_1}f \right\|\left\| \diff_{y_3 z_3} f \right\|, \\
& \qquad \qquad \qquad \left\|\diff_{y_3z_3} \diff_{y_1z_1}f \right\| \left\|  \diff_{y_2 z_2} f \right\|, \, \left\|\diff_{y_3z_3} \diff_{y_2z_2}f \right\| \left\| \diff_{y_1 z_1} f \right\|  ,  \\
& \qquad \qquad \qquad  \qquad \left\|\diff_{y_1z_1}f \right\| \left\| \diff_{y_2 z_2} f \right\| \left\| \diff_{y_3 z_3} f \right\|    \big\}.
\end{align*}
In order to define $\rho_k$ for higher $k\in\N$, it is (unfortunately) necessary to develop some notation to keep track of the combinatorics. 

\smallskip

Let $\mathcal T_k = \left( (y_1,z_1),\ldots,(y_k, z_k) \right) \in (\Rd\times\Rd)^k$ be a $k$-tuple formed by couples $(y_j ,z_j) \in \R^d \times \R^d$. For a function $f:\R^d \to \R^{m \times n}$, $m,n \in \N$, we define a difference operator $\Delta_{{\mathcal T}_k}$ acting on $f$ by
\begin{equation}
\label{e.iterate-diff}
\diff_{\mathcal T_k} f(x) = \diff_{y_k z_k} \cdots \diff_{y_1 z_1} f(x)\,.
\end{equation}
In what follows, we need to control composite quantities of $L^\infty(\R^d;\R^{m \times n})$-norms of differences, and for this we need a proper way to describe partitions of $\{1,\ldots,k\}$. 
Let $\mathcal P_{j,k}$, $j \in \{0,1,\ldots,k\}$, stand for a set of increasing ordered subsets of $\{1,\ldots,k\}$ with $j$ members. In other words, for $j>0$ we define
\begin{equation*} \label{}
\mathcal P_{j,k}:= \left\{ \zeta \in \left\{ 1,\ldots,k\right\}^j \,:\,    \zeta_i < \zeta_{i+1}  \  \forall  i \in \{ 1,\ldots,j-1\} \right\}
\end{equation*}
and, for $j=0$, we set $\mathcal P_{0,k} = \emptyset$. By abuse of notation, we also think of $\zeta \in \mathcal{P}_{j,k}$ as being ordered subsets of $\{ 1,\ldots,k\}$. Then, for $\zeta \in \mathcal P_{j,k}$, we denote by $\zeta^c$ the unique member of $\mathcal P_{k-j,k}$ such that $\{1,\ldots,k\} = \zeta \cup \zeta^c$. By $|\zeta|$ we denote the number of elements in $\zeta \in P_{j,k}$, i.e., $|\zeta| = j$. For $\mathcal T_k$ as above and for $\zeta \in \mathcal P_{j,k}$ we denote by the $j$-tuple $\zeta(\mathcal T_k)$  the set  $\left( ( y_{\zeta_1}, z_{\zeta_1}), \ldots, (y_{\zeta_{j}}, z_{\zeta_{j} }) \right)$ for $j>0$, and if $\zeta \in \mathcal P_{0,k}$, we set $\zeta(\mathcal T_k) = \emptyset$  and  $\diff_{\zeta(\mathcal T_k)} f = 1$.  Furthermore, we let $\mathcal P_k$ stand for the family of subsets $(\zeta^1,\ldots,\zeta^k) \in \mathcal P_{j_1,k} \times \cdots \times \mathcal P_{j_k,k}$ with $\sum_{i=1}^k j_i = k$.

 \smallskip

We are now ready to define two central concepts of the paper. For a given $f \in C(\R^d;\R^{m \times n})$, $m,n \in \N$, and $\mathcal T_k = \{(y_1,z_1),\ldots,(y_k, z_k)\}$ we define 
\begin{equation} \label{e.G_k}
G_k(f,\mathcal T_k) := \max_{(\zeta^1,\ldots,\zeta^k) \in \mathcal P_k} \left\{ \prod_{j=1}^k \left\| \diff_{\zeta^j(\mathcal T_k)} f  \right\|_{L^\infty(\R^d;\R^{m \times n})} \right\}\,,
\end{equation}
that is, the maximum is taken over all (increasing, ordered) partitions of $\mathcal P_k$. Finally, we define $\rho_k$, for each $k \in \N$ and $R \geq 1$, by
\begin{equation} \label{e.defrhok}
 \rho_k(f,R) : = \sup_{y_1\in\Rd} \inf_{z_1\in B_R} \cdots \sup_{y_k\in\Rd} \inf_{z_k\in B_R} G_k\left(f , \left( (y_1, z_1),\ldots, (y_k,z_k)†\right) \right).
\end{equation}

\smallskip

The main justification that $\rho_k(\a,R)$ is a good quantification of the almost periodicity assumption comes in Section~\ref{s.ergodicthm} where we use it to obtain a quantitative ergodic theorem for almost periodic functions, a key ingredient in our arguments. On the other hand, the $\rho_k$'s are useful in our context because, as will show in \ssref{phiep}, we can essentially estimate $\rho_k(\nabla \phi,R)$ for a corrector $\phi$ in terms of $\rho_k(\a,R)$. To show that $\phi$ itself is bounded, it suffices to have $\rho_k(\a,R) \lesssim R^{-1-\delta}$ for some $\delta > 0$. 

\smallskip

The modulus $\rho_k(\a,R)$ tends to decay faster in $R$ as $k$ becomes larger, but the prefactor constants become larger in $k$. Indeed, in the case of (sufficiently smooth) quasiperiodic coefficients with frequencies satisfying a Diophantine condition, we have an (essentially sharp) bound like $\rho_k(\a,R) \lesssim C^kR^{-ck}$ (see Section~\ref{s.examples}). In the general almost periodic case, we will not have power-like decay in~$R$ of any $\rho_k(a,R)$. Therefore, to obtain the necessary estimates, it is necessary to choose $k$ depending on $R$. This motivates us to define
\begin{equation}
\label{e.defrhostar}
\rho_*(\a,R):= \inf_{k \in\N \cap [1,R] } C^k k! \rho_k\left(\a, k^{-1} R\right).
\end{equation}
The constant $C$ in~\eqref{e.defrhostar} depends on the parameters in the assumptions (besides the assumption on $\rho_*$) and can be computed by an inspection of the proofs.
The main quantitative ergodicity assumption that we make on the coefficients is therefore that there exists an exponent $\delta > 0$ and a constant $K\geq 1$ such that, for every $R\geq 1$, 
\begin{equation}
\label{e.ergodicassump}
\rho_*(\a,R) \leq K R^{-1-\delta}.
\end{equation}
In Section~\ref{s.examples}, we give a general class of almost periodic coefficients satisfying~\eqref{e.ergodicassump} which is strictly larger than the set of quasiperiodic coefficients satisfying a Diophantine condition. In fact, under Kozlov's condition, it is easy to show that the modulus $\rho_*(\a,R)$ decays faster than any finite power of~$R$.

\subsection{Main results}

Before giving the statement of the main result, we review the assumptions on the coefficient field. We require that~$\a:\Rd\to\R^{d\times d}$ satisfy~\eqref{e.ue} and~\eqref{e.ergodicassump} as well as, for some $K\geq1$, $\gamma\in (0,1]$ and $\kappa > \frac 52$ and every $x,y\in\Rd$ and $R\geq 1$,
\begin{equation}
\label{e.aholder}
\left| \a(x) - \a(y) \right| \leq K\left|x-y\right|^\gamma. 
\end{equation}
and
\begin{equation}
\label{e.technicalassump}
\rho_1(\a,R) \leq K \left( \log R \right)^{-\kappa}.
\end{equation}
The latter two assumptions are purely technical. 
We make them in order to directly apply the regularity theory developed in~\cite{ASh}, since the assumption is made in that paper, rather than reprove the needed estimates using only~\eqref{e.ergodicassump}. Indeed, while we do not give the details here, we expect that, by the methods of~\cite{ASh}, a decay assumption like $\rho_*(R) \lesssim R^{-\theta}$ for some $\theta>0$ (or even a weaker Dini-type condition) would suffice to yield Lipschitz and $W^{1,p}$ estimates of the sort proved in~\cite{ASh} (and summarized below in Proposition~\ref{p.lipschitz}). Even the H\"older condition~\eqref{e.aholder} can probably be removed, as the regularity estimates can survive in an appropriate form without it (see~\cite{AS}). 

\smallskip

For brevity, here and throughout the paper we collect all the constants in these assumptions by denoting
\begin{equation*}
\data:= \left(d,\Lambda,K,\delta,\kappa,\gamma\right). 
\end{equation*}

\smallskip

We next present the main result. 

\begin{theorem}
\label{t.correctors}
Suppose that the coefficient field $\a$ satisfies~\eqref{e.ue},~\eqref{e.ergodicassump},~\eqref{e.aholder} and~\eqref{e.technicalassump}. Then, for each unit vector $e\in\partial B_1$, there exists an almost periodic function $\phi \in W^{1,\infty}(\Rd)$ satisfying the equation
\begin{equation*}
-\nabla \cdot \left( \a(x) \left(e + \nabla \phi \right) \right) = 0\quad \mbox{in} \ \Rd. 
\end{equation*}
Moreover, there exists a constant $C(\data) < \infty$ such that 
\begin{equation}
\label{e.boundedcorrector}
\sup_{\Rd} \left| \phi  \right| \leq C.
\end{equation}
\end{theorem}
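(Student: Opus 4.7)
The plan is to follow verbatim the three-step scheme (I)--(III) sketched in the introduction, constructing bounded correctors as limits of approximate correctors with uniform oscillation bounds. For each $\ep \in (0,1]$, let $\phi_\ep$ be the unique bounded solution of the regularized corrector problem
\begin{equation*}
\ep \phi_\ep - \nabla \cdot \left( \a(x) \left( e + \nabla \phi_\ep \right) \right) = 0 \quad \mbox{in} \ \Rd,
\end{equation*}
whose existence, almost periodicity and Lipschitz regularity follow from~\pref{lipschitz} under our standing assumptions~\eqref{e.ue},~\eqref{e.aholder},~\eqref{e.technicalassump}. The goal is the $\ep$-uniform estimate $\osc_{\Rd} \phi_\ep \leq C(\data)$; once this is established, a standard compactness argument extracts a subsequential limit $\phi_\ep - \phi_\ep(0) \to \phi$ in $C^{0,\alpha}_{\mathrm{loc}}$ that inherits almost periodicity and solves the desired corrector equation.

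\textbf{Step~(I): iterated difference calculus for $\phi_\ep$.} I would apply the operator $\Delta_{\mathcal T_k}$ of~\eqref{e.iterate-diff} to the equation for $\phi_\ep$, using the Leibniz-type identity $\diff_{yz}(fg) = (\diff_{yz}f)\, T_zg + T_yf\,(\diff_{yz}g)$. A single application shows that $\diff_{y_1z_1}\phi_\ep$ solves a PDE of the same type but with a divergence-form right-hand side involving $\diff_{y_1z_1}\a$ and $(e + \nabla T_{z_1}\phi_\ep)$; iterating to order $k$ produces a sum of right-hand sides indexed exactly by the partitions in $\mathcal P_k$, each summand being a product of factors of the form $\diff_{\zeta(\mathcal T_k)}\a$ times $\nabla \diff_{\zeta^\prime(\mathcal T_k)} T_z\phi_\ep$. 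This is the point where the combinatorial definition of $G_k$ in~\eqref{e.G_k} is dictated by the PDE. Inserting these identities into~\pref{lipschitz} and arguing by induction on $k$ (treating each previously estimated $\diff$-order as a source term), I expect to obtain
\begin{equation*}
\left\| \nabla \diff_{\mathcal T_k} \phi_\ep \right\|_{L^\infty(\Rd)} \leq C^k k!\, G_k(\a, \mathcal T_k),
\end{equation*}
and after taking $\sup_{y_j}\inf_{z_j \in B_R}$ as in~\eqref{e.defrhok},
\begin{equation} \label{e.modcontrol-proposal}
C^k k! \, \rho_k\left(\nabla \phi_\ep, R\right) \leq C^{2k} (k!)^2 \, \rho_k(\a, R),
\end{equation}
so that, after optimizing in $k$ according to~\eqref{e.defrhostar}, $\rho_*(\nabla \phi_\ep, R) \lesssim \rho_*(\a, R) \lesssim K R^{-1-\delta}$.

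\textbf{Steps~(II)--(III): ergodic theorem plus multiscale Poincar\'e.} With the modulus bound~\eqref{e.modcontrol-proposal} in hand, the quantitative ergodic theorem (\pref{ergodicthm}) transfers the decay to spatial averages of $\nabla \phi_\ep$: for every $R \geq 1$ and every ball (or cube) of radius $R$,
\begin{equation*}
\left| \fint_{B_R(x)} \nabla \phi_\ep  - \left\langle \nabla \phi_\ep \right\rangle \right| \leq C \rho_*(\a, R)^{1/2} + (\mbox{lower order}),
\end{equation*}
while the mean $\langle \nabla \phi_\ep\rangle$ is $O(\ep)$ by testing the $\phi_\ep$-equation against a smooth cutoff and using the $L^\infty$ bound $\|\ep \phi_\ep\|_\infty \leq C$. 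Plugging this spatial-average control into the multiscale Poincar\'e inequality (\pref{heatfuckingflow}) along the dyadic scales $R_j = 2^j$ yields
\begin{equation*}
\osc_{B_R} \phi_\ep \lesssim \sum_{j : R_j \leq R} R_j \, \rho_*(\a, R_j)^{1/2} + \ep\, R \lesssim \sum_{j} R_j \cdot R_j^{-(1+\delta)/2} + \ep R,
\end{equation*}
and the first sum converges geometrically precisely because $\delta > 0$; sending $R \to \infty$ after first sending $\ep \to 0$ along a suitable subsequence (or, more cleanly, taking $R$ just below $1/\ep$) gives the desired uniform oscillation estimate.

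\textbf{Main obstacle.} The delicate part is step~(I): propagating the iterated differences through the nonlinearity (in $\a$) of the corrector equation, correctly matching the combinatorics of the Leibniz expansion with the partitions defining $G_k$, and iterating the Lipschitz bound of~\cite{ASh} $k$ times without losing more than the factor $C^k k!$ that is already absorbed in~\eqref{e.defrhostar}. Once this bookkeeping is under control, steps~(II)--(III) are a matter of choosing, at each scale~$R$, the right value of $k \in \N \cap [1,R]$ in~\eqref{e.defrhostar} so that the combinatorial blow-up in~\eqref{e.modcontrol-proposal} is dominated by the $\rho_k(\a,k^{-1}R)$ decay, which is exactly what the definition of $\rho_*$ is tailored to guarantee.
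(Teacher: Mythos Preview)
Your overall three-step scheme is exactly the paper's strategy, and you have correctly identified Step~(I)---the iterated Leibniz expansion and the induction through \pref{lipschitz}---as the delicate part. However, there is a genuine gap in how you close the argument, and it is not the bookkeeping you flag.

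\textbf{The $\ep^{-\alpha}$ loss.} Your claimed bound $\left\| \nabla \diff_{\mathcal T_k} \phi_\ep \right\|_{L^\infty(\Rd)} \leq C^k k!\, G_k(\a, \mathcal T_k)$, uniform in $\ep$, is not what comes out of the regularity theory. The auxiliary estimate from \lref{aux u} (which is what Step~(I) actually uses) lives on balls of the natural scale $1/\ep$, and converting it to a local $L^p$ or $L^\infty$ bound costs a factor $(s\ep)^{-d/p}$. What one actually obtains (this is \lref{babymod} and~\eqref{e.modcontrol} in the paper) is
\begin{equation*}
\omega_k\!\left( \nabla\phi_\ep, R \right) \leq C^k k!\, \ep^{-\alpha}\, \rho_k(\a, R)
\end{equation*}
for every $\alpha>0$, with $C$ depending on $\alpha$. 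Running your Steps~(II)--(III) with this estimate gives only $\sup_{\Rd} |\phi_\ep| \leq C\ep^{-\alpha}$ (this is \lref{babybound}), not a uniform bound.

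\textbf{The missing idea.} The paper closes this gap by passing to the dyadic increments $\psi_\ep := \phi_\ep - \phi_{2\ep}$. These satisfy
\begin{equation*}
\ep^2 \psi_\ep - \nabla \cdot \left( \a \nabla \psi_\ep \right) = 3\ep^2 \phi_{2\ep} \quad \mbox{in} \ \Rd,
\end{equation*}
and since $\|\phi_{2\ep}\|_{L^\infty} \leq C\ep^{-\alpha}$, the right-hand side is $O(\ep^{2-\alpha})$. The Lipschitz estimate then gives $\|\nabla\psi_\ep\|_{L^\infty} \leq C\ep^{1-\alpha}$. Now apply the multiscale Poincar\'e inequality to $\psi_\ep$: the small scales $t \lesssim \ep^{-2+4\alpha}$ are controlled by this gradient bound, while the large scales use the spatial-average control on $\nabla\phi_\ep$ and $\nabla\phi_{2\ep}$ from~\eqref{e.dashiiit}. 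Balancing yields $\|\psi_\ep\|_{L^\infty} \leq C\ep^{\frac\delta8 \wedge \frac14}$, and summing over dyadic $\ep = 2^{-n}$ shows $\{\phi_{2^{-n}}\}$ is Cauchy in $W^{1,\infty}(\Rd)$.

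\textbf{Minor corrections.} The square root in your Step~(III) estimate is wrong: the ergodic theorem (\pref{ergodicthm}) gives a bound linear in $\omega_k$, not $\omega_k^{1/2}$. With the square root your sum $\sum_j R_j^{(1-\delta)/2}$ diverges for $\delta<1$; without it the scaling is $\sum_j R_j\,\rho_*(\a,R_j) \lesssim \sum_j R_j^{-\delta} < \infty$, as desired. Also, $\langle \nabla \phi_\ep \rangle = 0$ exactly (not $O(\ep)$), since $\phi_\ep$ is bounded almost periodic; the spurious $\ep R$ term should not appear. Finally, the paper uses $\ep^2\phi_\ep$ rather than $\ep\phi_\ep$ in the regularized equation, though this is cosmetic.
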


\smallskip

We believe that Theorem~\ref{t.correctors} cannot be improved in terms of $\rho_*(\a,R)$, that is, an assumption $\rho_*(\a,R) \lesssim R^{-1}$ does not imply the existence of a bounded corrector. We do not however justify this belief by constructing an example here.
On the other hand the required decay of $\rho_*$ is a weak enough condition that is satisfied by a class of coefficients that includes some almost periodic functions that are not quasi-periodic, see discussion Section~\ref{s.examples}.
In addition, the arguments in this paper do yield bounds on the sublinear growth of $\phi$ under the weaker condition (than \eqref{e.ergodicassump}) that, for some $\theta \in (0,1]$, 
\begin{equation*} \label{}
\rho_*(\a,R) \leq K R^{-\theta}.
\end{equation*}
In this case we can show, by inspecting the arguments in \ssref{phiep}, that, for every $\alpha>0$, there exists $C(\alpha,\data) \geq 1$ such that the (possibly unbounded) corrector $\phi$ satisfies, for every $R\geq 1$, 
\begin{equation} \label{e.sublinearcorrector}
\sup_{x\in \Rd} \sup_{y \in B_R(x)} \left| \phi(x) - \phi(y) \right| \leq C R^{1-\theta+\alpha}.
\end{equation}

\smallskip

In terms of homogenization, an estimate like~\eqref{e.boundedcorrector} implies an $O(\ep)$ rate of convergence in homogenization. For example, if we denote the effective coefficients by~$\overline\a$ and $U\subseteq \Rd$ is a smooth domain, $g:U\to \R$ is sufficiently smooth and $u^\ep,u\in g + H^1_0(U)$ are solutions of
\begin{equation*}
 -\nabla \cdot \left( \a\left( \frac x\ep \right) \nabla  u^\ep \right) = 0  \qquad \mbox{and} \qquad -\nabla  \cdot \left( \overline \a \nabla u \right) = 0  \qquad \mbox{in}  \ U,
\end{equation*}
then we have the estimate
\begin{equation*}
\left\| u^\ep - u \right\|_{L^\infty(U)} \leq O(\ep)
\end{equation*}
where the implicit prefactor constant depends only on $(\data,U,g)$. Moreover, one can use the corrector to obtain estimates on the two-scale expansion (away from boundary layers): for any $p<\infty$ and $V\Subset U$,
\begin{equation*}
\left\| u^\ep - u - \ep \nabla u\cdot \Phi\left( \tfrac\cdot\ep \right) \right\|_{W^{1,p}(V)} \leq O(\ep),
\end{equation*}
where $\Phi$ is the vector $(\phi_{e_1},\ldots,\phi_{e_d})$ of correctors in the unit directions and the prefactor constant depends additionally  on $p$ and $V$. Similarly, an estimate like~\eqref{e.sublinearcorrector} for the sublinear growth of the corrector implies the same estimates summarized above but with $O(\ep^{\theta-})$ in place of $O(\ep)$. The proofs of these facts are identical to those in the periodic case and thus very classical (see, for example~\cite{BLP} or~\cite{AL1}), so we do not give the details here. 

\smallskip

Finally, we mention that the existence and boundedness of higher-order correctors can also be established using the arguments in this paper, justifying more terms in the \emph{a priori} two-scale expansion in homogenization. The statement is roughly that  for each $k\in\N$, a $k$th order corrector exists and is bounded under the assumption that $\rho_*(\a,R) \lesssim R^{-k-\delta}$ for some $\delta >0$.

\subsection{Outline of the paper}
In the next section, we present a quantitative ergodic theorem for almost periodic functions in terms of the $\rho_k$ defined in Section~\ref{ss.almostperiodic}. This is separate from the rest of the paper and is of independent interest. In Section~\ref{s.proofs} we give the proof of Theorem~\ref{t.correctors}. In Section~\ref{s.examples} we give examples of almost periodic coefficient fields satisfying our assumptions and prove the sharpness of our results.

\section{Quantitative ergodic theorem for almost periodic functions}
\label{s.ergodicthm}

The next proposition is a quantitative ergodic theorem for uniformly almost periodic functions. It roughly gives a convergence rate for the spatial averages of an almost periodic function to its mean where, motivated by \pref{heatfuckingflow} in \sref{poinca} below, we express spatial averages in terms of heat flow. While we expect that  results of a similar flavor are known, perhaps by Fourier methods, we could not find a similar statement in the literature. Here and throughout the paper, $\Phi$ denotes the standard heat kernel 
\begin{equation}
\label{e.heatkernel}
\Phi(x,t):=(4\pi t)^{-\frac d2} \exp\left( - \frac{|x|^2}{4t}\right).
\end{equation}

\smallskip

In the following statement, we use a slightly different way of measuring almost periodicity from the $\rho_k$'s defined in~\eqref{e.defrhok}. Compared to $\rho_k$, in view of future applications, we need to relax the $L^\infty$ norm to a hybrid between $L^1$ and $L^\infty$ norms. On the other hand, we only need to use one term in the maximum in the definition of $\rho_k$. We denote the unit cube by $Q:= [0,1]^d$ and define, for $f\in  L^1_{\mathrm{loc}} (\Rd)$, $R\geq 1$ and $k\in\N$:

\begin{equation}
\omega(f,\mathcal T_k) := 
\sup_{z' \in \R^d} \int_{B_1(z')}  \left| \diff_{\mathcal T_k} f(x) \right|\,dx
\end{equation}
and
\begin{equation}
\omega_k(f,R) := \sup_{y_1\in\Rd} \inf_{z_1\in B_R}\cdots \sup_{y_k\in\Rd} \inf_{z_k\in B_R} \omega(f,\left((y_1,z_1),\ldots,(y_k,z_k)) \right)) .
\end{equation}
We shall prove in \ssref{phiep} that $\rho_k$ essentially controls $\omega_k$ in our applications to homogenization, cf. \eqref{e.modcontrol}.
If $f:\Rd \to \R^k$ is a uniformly almost periodic function in the sense that $f$ is continuous, bounded and $\rho_1(f,R) \to 0$ as $R\to \infty$, then $f$ has a \emph{mean value} which we denote by $\left\langle f \right\rangle$. It is characterized for example by the fact that 
\begin{equation*}
\left\langle f \right\rangle = \lim_{R\to \infty} \fint_{B_R} f(x)\,dx.
\end{equation*}
We refer to~\cite{B} for more on almost periodic functions.

\begin{proposition}
\label{p.ergodicthm}
Assume $f \in L^1_{\mathrm{loc}}(\Rd)$ is uniformly almost periodic in the sense that 
\begin{equation}
\label{e.falmostper}
\rho_1(f,R) \to 0 \quad \mbox{as} \ R \to \infty. 
\end{equation}
Let $u$ be the solution of the Cauchy problem
\begin{equation*}
\left\{ \begin{aligned}
& \partial_t u - \Delta u = 0 & \mbox{in} & \ \Rd \times (0,\infty), \\
& u(\cdot,0) = f & \mbox{in} & \ \Rd. 
\end{aligned} \right.
\end{equation*}
Then there exist constants $C\in [1,\infty)$ and $c\in (0,1]$, depending only on~$d$, such that, for every $k\in\N$ and $t \in [k,\infty)$,
\begin{equation}
\label{e.ergthm1}
\osc_{\Rd} u(\cdot,t) \leq C^k \inf_{R\geq 1} \left( \omega_k(f,R) + \exp\left( - \frac{ct}{kR^2} \right)\sup_{z'\in \R^d} \| f \|_{L^1(B_1(z'))}\right)
\end{equation}
and
\begin{equation}
\label{e.ergthm2}
\sup_{\Rd} \left| \nabla u(\cdot,t) \right| \leq C^k \inf_{R\geq 1} \left( t^{-\frac12} \omega_k(f,R) +  \exp\left( - \frac{ct}{kR^2} \right)\sup_{z'\in \R^d} \| f \|_{L^1(B_1(z'))} \right).
\end{equation}
\end{proposition}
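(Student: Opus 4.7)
The plan is to prove both estimates by iterating a single ``level reduction'' $k$ times, each level combining an almost-periodicity splitting with a heat-semigroup contraction. Assume without loss of generality that $\langle f \rangle = 0$ (subtracting a constant affects neither $\osc$ nor $\nabla u$); then the heat extension of every iterated difference $\Delta_{\mathcal{T}_j} f$ is mean-zero, so $\|v(\cdot,s)\|_\infty \leq \osc v(\cdot,s)$, and this bootstrap is exactly what lets the iteration close. Two Gaussian facts are used throughout: $\|\Phi(\cdot,s) * h\|_\infty \leq C \sup_{z'} \|h\|_{L^1(B_1(z'))}$ for $s \geq 1$ (by unit-cube decomposition of $\R^d$ and Gaussian tails) and $\|\nabla \Phi(\cdot,s)\|_{L^1(\R^d)} \leq C s^{-1/2}$.

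One level reduction. For a mean-zero $g \in L^1_{\mathrm{loc}}(\R^d)$ with heat extension $v(\cdot,s) := \Phi(\cdot,s) * g$, the identity $v(\cdot+w,s) - v(\cdot+z,s) = 2\Phi(\cdot,s) * \Delta_{wz} g$ combined with $|v(y_0+z,s) - v(y_0,s)| \leq R\|\nabla v(\cdot,s)\|_\infty$ and the semigroup identity $\nabla v(\cdot,s) = \nabla \Phi(\cdot,\tau) * v(\cdot,s-\tau)$ yields
\[ \osc v(\cdot,s) \leq 2 \sup_w \inf_{z \in B_R} \|\Phi(\cdot,s) * \Delta_{wz} g\|_\infty + CR\tau^{-1/2} \osc v(\cdot,s-\tau). \]
Choosing $\tau = (4CR)^2$ makes the second prefactor $\leq 1/2$, and iterating $N$ times while the residual time stays $\geq 1$ gives the geometric sum
\[ \osc v(\cdot,s) \leq C \sup_w \inf_{z \in B_R} \sup_{\sigma \in [1,s]} \|\Phi(\cdot,\sigma) * \Delta_{wz} g\|_\infty + 2^{-N+1} M_g, \]
with $M_g := \sup_{z'} \|g\|_{L^1(B_1(z'))}$.

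Now iterate over $k$ levels: set $v_0(\cdot,s) := \Phi(\cdot,s) * f$ and recursively $v_j(\cdot,s) := \Phi(\cdot,s) * \Delta_{\mathcal{T}_j} f$, applying the reduction at level $j$ with $g = \Delta_{\mathcal{T}_{j-1}} f$ and a fresh shift $w_j$ matched by an optimal $z_j \in B_R$. The cumulative $\sup_{w_1}\inf_{z_1}\cdots \sup_{w_k}\inf_{z_k}$ produced by the iteration is exactly the recursive definition of $\omega_k(f,R)$, and the deepest quantity $\|v_k(\cdot,\sigma)\|_\infty$ is bounded by $C\omega_k(f,R)$ via the Gaussian $L^1_{\mathrm{loc}}$-bound. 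Distributing the total time budget evenly by taking $N = \lfloor (t-1)/(k(4CR)^2)\rfloor$ contractions per level (well-defined because $t \geq k$) produces, at each level, a tail $2^{-N} M \leq C e^{-ct/(kR^2)} M$; chaining the $k$ contributions (and optimizing the trivial regime $R^2 > t/k$ separately) yields
\[ \osc u(\cdot,t) \leq C^k \bigl(\omega_k(f,R) + e^{-ct/(kR^2)} M\bigr), \]
which is \eqref{e.ergthm1}. The gradient bound \eqref{e.ergthm2} follows by one further application of the semigroup: $\|\nabla u(\cdot,t)\|_\infty \leq \|\nabla \Phi(\cdot,t/2)\|_{L^1}\|u(\cdot,t/2)\|_\infty \leq C t^{-1/2} \osc u(\cdot,t/2)$, combined with \eqref{e.ergthm1} at time $t/2$ (with a direct variant of the same level-reduction argument handling the narrow range $t \in [k,2k)$).

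The main technical obstacle is the combinatorial bookkeeping across the $k$ nested iterations: ensuring every residual time stays $\geq 1$ so that the Gaussian $L^\infty$-bound applies uniformly along all chains, and verifying that the nested $\sup_{w}\inf_{z}$ pattern produced by the iteration matches exactly the (superficially elaborate) recursive definition of $\omega_k$. The $C^k$ prefactor comes from the constant-factor accumulation across the $k$ levels and is the unavoidable cost of iterating.
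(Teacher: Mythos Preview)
Your proposal is correct and takes a genuinely different route from the paper's proof in the base case. The paper proves the $k=1$ estimate by iterating on the \emph{spatial derivative order}: it establishes the recursion
\[
\osc_{\Rd}\nabla^m u(\cdot,t)\leq C\omega_1(f,R)\,t^{-m/2}\bigl(C(1+m)\bigr)^{m/2+1}+CR\,\osc_{\Rd}\nabla^{m+1}u(\cdot,t),
\]
iterates it up to an optimally chosen $m\sim t/R^2$, and closes with the crude bound on $\nabla^{m}u$. This requires the auxiliary Lemma~\ref{l.heatkernelL1}, namely $\int|\nabla^n\Phi(\cdot,t)|\leq (C(1+n)/t)^{n/2}$, proved via Hermite polynomials. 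Your approach replaces this entirely by iterating in \emph{time}: the single semigroup step $\|\nabla v(\cdot,s)\|_\infty\leq C\tau^{-1/2}\|v(\cdot,s-\tau)\|_\infty$, with $\tau\sim R^2$, already gives a contraction factor $1/2$ on $\osc v$, and $N\sim t/R^2$ repetitions produce the exponential tail directly. This is more elementary (no Hermite lemma, only $\|\nabla\Phi(\cdot,\tau)\|_{L^1}\lesssim\tau^{-1/2}$) and more in keeping with the semigroup spirit; the paper's derivative iteration is in effect a Taylor-series version of the same contraction at fixed time.

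For the passage from $k$ to $k+1$, the two proofs are essentially the same: the paper also splits the time interval, applies the inductive hypothesis to $\Delta_{yz}f$ on the first $\tfrac{k}{k+1}t$, and then the $k=1$ case on the remaining $\tfrac{t}{k+1}$. Your nested level reduction is a repackaging of this. One point worth tightening in your write-up: after substituting level $j$ into level $j-1$, the stray $\sup_\sigma$ sits between $\inf_{z_{j-1}}$ and $\sup_{w_j}$; you need the elementary inequality $\sup_\sigma\inf_{z}\leq\inf_{z}\sup_\sigma$ (after commuting $\sup_\sigma$ with $\sup_{w_j}$) to push it inside and recover exactly the $\omega_k$ quantifier string. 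Also, your derivation of~\eqref{e.ergthm2} from~\eqref{e.ergthm1} via $\nabla u(\cdot,t)=\nabla\Phi(\cdot,t/2)\ast u(\cdot,t/2)$ only covers $t\geq 2k$; for $t\in[k,2k)$ the cleanest fix is to run the same time-iteration directly on $\sup|\nabla v|$ (paralleling what the paper does in its Step~1), rather than appeal to a ``variant.''
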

\begin{proof}
We first prove Proposition~\ref{p.ergodicthm} in the case $k=1$ before obtaining the statement for general $k\in\N$ by an induction argument. For convenience, we denote
\begin{equation*}
\| f \| := \sup_{z' \in \Zd} \| f \|_{ L^1(z' + Q)} \leq C \sup_{z'\in \R^d} \| f \|_{L^1(B_1(z'))}.
\end{equation*}
Note that for all $t\ge 0$, $u(\cdot,t)$ is uniformly almost periodic (this can be seen using formula \eqref{e.rep-form} below).

\smallskip

\emph{Step 1.} The proof in the case $k=1$. We use the representation formula
\begin{equation}\label{e.rep-form}
u(x,t) = \int_{\Rd} f(y) \Phi(x-y,t) \, dt,
\end{equation}
where $\Phi$ is the heat kernel defined in~\eqref{e.heatkernel}. Fix $t\geq 1$, $y_1 \in \Rd$ and $R\geq 1$. Select $z_1 \in B_R$ such that,
with $\mathcal T_1 = \left( (y_1,z_1) \right)$,
\begin{equation*}
\sup_{z' \in\R^d} \int_{B_1(z')} \left| \diff_{\mathcal T_1} f(x) \right|\,dx \leq \omega_1(f,R).
\end{equation*}
Observe that, for every $m \in\N$, 
\begin{align*}
\left| \diff_{\mathcal T_1} \nabla^m u(0,t) \right| 
& = \left| \int_{\Rd} \diff_{\mathcal T_1} f(x)  \nabla^m \Phi(x,t)  \,dx  \right| \\
& = \left| \sum_{z'\in\Zd} \int_{z'+Q} \diff_{\mathcal T_1} f(x)\nabla^m \Phi(x,t)  \,dx  \right| \\
& \leq \sum_{z'\in\Zd}  \left\| \nabla^m\Phi(x,t) \right\|_{L^\infty(z'+Q)} \left( \int_{z'+Q} \left| \diff_{\mathcal T_1} f(x) \right|\,dx \right) \\
& \leq C \omega_1(f,R) \sum_{z'\in\Zd} \left\| \nabla^m \Phi(x,t) \right\|_{L^\infty(z'+Q)}. 
\end{align*} 
Since $t\geq 1$, standard estimates for the heat kernel give
\begin{equation*} \label{}
\sum_{z'\in\Zd} \left\| \nabla^m \Phi(x,t) \right\|_{L^\infty(z'+Q)} \leq C(m+1) \int_{\Rd} \left| \nabla^m \Phi \left(x,t - \frac12 \right)  \right|\,dx.
\end{equation*}
Using the bounds 
\begin{equation*}
 \int_{\Rd} \left| \nabla^m \Phi(x,t) \right|\,dx \leq t^{-\frac m2} \left( C (1+m)\right)^{\frac m2},
\end{equation*}
which are proved below in Lemma~\ref{l.heatkernelL1}, we obtain
\begin{equation*} 
\left| \diff_{\mathcal T_1} \nabla^m u(0,t) \right|  \leq C \omega_1(f,R)   t^{-\frac m2} \left( C (1+m)\right)^{\frac m2 +1} .
\end{equation*}
Arguing in a similar way, we also have the crude bound
\begin{equation}
\label{e.crude}
 \sup_{x\in\Rd} \left| \nabla^{m} u(x,t)  \right| \leq C \| f \|  t^{-\frac m2} \left( C (1+m)\right)^{\frac m2+1} .
\end{equation}
Next we observe that since $z_1 \in B_R$, 
\begin{equation*}
\left| \nabla^m u(z_1,t) - \nabla^m u(0,t) \right|  \leq R \sup_{x\in\Rd} \left| \nabla^{m+1} u(x,t)  \right| \leq R \osc_{x\in\Rd}  \nabla^{m+1} u(x,t),
\end{equation*}
where we used that $\langle \nabla^{m+1} u(\cdot,t)\rangle=0$ since $u(\cdot,t)$ is uniformly almost periodic.
Since $y_1\in \R^d$ was arbitrary, the triangle inequality and the three previous inequalities yield
\begin{equation} \label{e.snapback}
\osc_{x\in\Rd}  \nabla^m u(x,t) \leq  C \omega_1(f,R) t^{-\frac m2} \left( C (1+m)\right)^{\frac m2+1}   + CR \osc_{x\in\Rd}  \nabla^{m+1} u(x,t).
\end{equation}
Iterating~\eqref{e.snapback}, we obtain, for every $m\in\N$, using in addition \eqref{e.crude} to close the expression:
\begin{multline*} 
\osc_{x\in\Rd} u(x,t) \\
\leq C \omega_1(f,R)  \sum_{n=0}^{m-1} \left( \frac{C (1+n)R^2}{t}\right)^{\frac n2}(1+n) +C  \|f\| \left( \frac{C (1+m)R^2}{t}\right)^{\frac m2}(1+m)   .
\end{multline*}
We next choose $m$ and $R$ to minimize the expression on the right side. We proceed by first selecting~$m$ to be the largest positive integer such that  
\begin{equation*}
C R^2 t^{-1} (1+m) \leq \frac12,
\end{equation*}
with $m=0$ if no such positive integer exists. 
If $m\neq 0$, this yields for all $0\le n\le m$,
\begin{eqnarray*}
\left( \frac{C (1+n)R^2}{t}\right)^{\frac n2}(1+n)&\le & (1+n)\exp(\frac n2\log \frac{C (1+m)R^2}{t}) \\
&\leq & (1+n) \exp(-\frac n2 \log 2) \le C\exp(-\frac n4 \log 2),
\end{eqnarray*}
so that
\begin{equation*}
 \left( C R^2 t^{-1} (1+m) \right)^{\frac m2}(1+m) \leq C\exp\left( - c R^{-2} t \right) 
\end{equation*}
and
\begin{equation*}
\sum_{n=0}^{m-1} \left( C R^2 t^{-1} (1+n) \right)^{\frac n2} \leq C.
\end{equation*}
For $m\neq 0$ this gives us the bound
\begin{equation*}
\osc_{x\in\Rd} u(x,t)  
 \leq C\omega_1(f,R) +C \| f \| \exp\left( - c R^{-2} t \right),
\end{equation*}
which also trivially holds if $CR^2t^{-1}>1$, that is, for $m=0$.
This completes the proof of~\eqref{e.ergthm1} for k=1. For~\eqref{e.ergthm2} with $k=1$, we proceed similarly: by induction, we get
 \begin{multline*}
\sup_{x\in\Rd} \left| \nabla u(x,t) \right| \\
 \leq R^{-1}\omega_1(f,R) \sum_{n=1}^{m-1} \left( C R^2 t^{-1}(1+n) \right)^{\frac n2}(1+n) +  \left( C R^2 t^{-1} (1+m) \right)^{\frac m2}  (1+m) \| f \|
\end{multline*}
and then notice that the same choice of $m$ also gives 
\begin{equation*}
\sum_{n=1}^{m-1} \left( C R^2 t^{-1} (1+n) \right)^{\frac n2}(1+n) \leq CRt^{-\frac12}.
\end{equation*}
This completes the proof of the proposition in the case $k=1$.

\smallskip

\emph{Step 2.} We now argue that the statement of the proposition for general $k\in\N$ follows by iterating the conclusion for $k=1$ obtained in the first step. We assume that the conclusion holds for some $k\in\N$ and demonstrate it for $k+1$. We fix $t \geq k+1$. 

\smallskip

Let $y\in\Rd$ and choose $z \in B_R$ so that 
\begin{equation*}
\omega_{k}(\diff_{yz} f,R) \leq \omega_{k+1}(f,R).
\end{equation*}
According to the induction hypothesis, we have
\begin{align*}
\osc_{\Rd} \diff_{yz} u \left(\cdot \,,\,\left( \frac {k}{k+1} \right)t\right)
& \leq  C^k \left( \omega_{k}(\diff_{yz} f,R) +  \exp\left( - \frac{c}{kR^2} \left( \frac {k}{k+1} \right)t\right) \| \diff_{yz} f \| \right) \\
& \leq C^{k} \left( \omega_{k+1}(f,R) + 2\exp\left( - \frac{ct}{(k+1)R^2} \right) \| f \| \right).
\end{align*}
It is clear that, by~\eqref{e.falmostper}, $u\left(\cdot, s\right)$ is uniformly almost periodic for each $s>0$ in the sense that $\rho_1(u(\cdot,s),R) \to 0$ as $R\to \infty$. Therefore $u(\cdot,s)$ has a mean value and 
\begin{equation*}
\left\langle \diff_{yz} u(\cdot,s) \right\rangle 
 = \left\langle T_{y} u(\cdot,s)   \right\rangle -  \left\langle T_{z} u(\cdot,s)   \right\rangle 
 =  \left\langle  u(\cdot,s)   \right\rangle -  \left\langle  u(\cdot,s)  \right\rangle 
 = 0.
\end{equation*}
We deduce that $ \| \diff_{yz} u(\cdot,s) \| \leq \osc_{\Rd} \diff_{yz} u(\cdot,s)$ and therefore obtain
\begin{equation*}
\left\| \diff_{yz} u\left(\cdot \,,\,\left( \frac {k}{k+1} \right)t\right) \right\|_{L^\infty(\Rd)}
\leq C^{k} \left( \omega_{k+1}(f,R) + 2\exp\left( - \frac{ct}{(k+1)R^2} \right) \| f \| \right).
\end{equation*}
Taking the supremum over $y\in\Rd$, we get
\begin{equation}
\label{e.dunkslam}
\omega_1\left( u\left(\cdot \,,\,\left( \frac {k}{k+1} \right)t\right),R \right) 
 \leq C^{k} \left( \omega_{k+1}(f,R) + 2\exp\left( - \frac{ct}{(k+1)R^2} \right) \| f \| \right).
\end{equation}
Note that the crude bound~\eqref{e.crude} above implies that, for every $s\geq 1$,
\begin{equation*}
\left\| u(\cdot,s ) \right\|_{L^\infty(\Rd)} \leq C\| f \|.
\end{equation*} 
Using this,~\eqref{e.dunkslam} and the proposition for $k=1$ proved in the previous step, we get, for every $s\geq 1$, 
\begin{align*}
\osc_{\Rd} u\left(\cdot \,,\, \left( \frac {k}{k+1} \right)t+s\right) 
& \leq C \inf_{R\geq 1} \left( \omega_1\left( u\left(\cdot \,,\,\left( \frac {k}{k+1} \right)t\right),R \right) + C\exp\left( - \frac{cs}{R^2} \right) \| f \| \right) \\
& \leq C^{k+1} \inf_{R\geq 1} \bigg( \omega_{k+1}(f,R) + \exp\left( - \frac{ct}{(k+1)R^2} \right) \| f \|\\
& \qquad \qquad+\exp\left(-\frac{cs}{R^2} \right)\|f\|\bigg).
\end{align*}
Applying this to $s=t/(k+1)$, we obtain~\eqref{e.ergthm1} for $k+1$. The proof of~\eqref{e.ergthm2} for $k+1$ is similar. The proposition now follows by induction. 
\end{proof}

To complete the proof of Proposition~\ref{p.ergodicthm}, we must prove the following lemma. This is surely well-known, but since we could not find a reference, we include a complete argument for the convenience of the reader. 

\begin{lemma}
\label{l.heatkernelL1}
For every $n\in\N$ and $t>0$,
\begin{equation*}
 \int_{\Rd} \left| \nabla^n \Phi(x,t) \right|\,dx \leq \left( C t^{-1} (1+n)\right)^{\frac n2}.
\end{equation*}
\end{lemma}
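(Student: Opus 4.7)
The plan is to reduce by scaling to the case $t=1$, then exploit the tensor product factorization of the Gaussian to reduce to a one-dimensional derivative bound, which in turn reduces to standard facts about Hermite polynomials. First, since $\Phi(x,t) = t^{-d/2}\Phi_1(x/\sqrt{t})$ with $\Phi_1(y) := (4\pi)^{-d/2}\exp(-|y|^2/4)$, a chain rule and change of variables yield
\begin{equation*}
\int_{\Rd} |\nabla^n \Phi(x,t)|\,dx = t^{-n/2}\int_{\Rd} |\nabla^n \Phi_1(y)|\,dy,
\end{equation*}
so the claim reduces to showing $\int_{\Rd} |\nabla^n \Phi_1(y)|\,dy \leq (C(1+n))^{n/2}$.

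Next, because $\Phi_1(y) = \prod_{i=1}^d \phi(y_i)$ with $\phi(s) := (4\pi)^{-1/2} e^{-s^2/4}$, for each multi-index $\alpha \in \N^d$ with $|\alpha|=n$ one has
\begin{equation*}
\int_{\Rd} |\partial^\alpha \Phi_1(y)|\,dy = \prod_{i=1}^d \int_\R |\phi^{(\alpha_i)}(s)|\,ds.
\end{equation*}
Bounding the tensor norm by the sum of its components, $|\nabla^n \Phi_1| \leq C^n \sum_{|\alpha|=n} |\partial^\alpha \Phi_1|$, and noting that the number of such multi-indices is at most $(n+d-1)^{d-1}$ (polynomial in $n$ and hence absorbable into $(C(1+n))^{n/2}$ by enlarging $C$), the problem reduces to proving the one-dimensional bound $\int_\R |\phi^{(k)}(s)|\,ds \leq (C(1+k))^{k/2}$ for each $k\in\N$.

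For this one-dimensional estimate I would use the classical Hermite identity $\frac{d^k}{du^k} e^{-u^2/2} = (-1)^k He_k(u) e^{-u^2/2}$. Combined with the substitution $u = s/\sqrt{2}$, this yields $\phi^{(k)}(s) = c_k He_k(s/\sqrt{2}) e^{-s^2/4}$ with $|c_k| \leq C 2^{-k/2}$. Then the Cauchy--Schwarz inequality, together with the orthogonality relation $\int_\R He_k(u)^2 e^{-u^2/2}\,du = k!\sqrt{2\pi}$, gives
\begin{equation*}
\int_\R |\phi^{(k)}(s)|\,ds \leq C\,2^{-k/2} \sqrt{k!},
\end{equation*}
and Stirling's estimate $k! \leq C(k/e)^k \sqrt{k}$ converts this to the desired $(C(1+k))^{k/2}$.

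I do not expect a serious obstacle: every ingredient is classical once the tensor factorization is recognized. The only care required is constant-tracking, to ensure that after multiplying by the polynomial combinatorial factor and applying Stirling the final exponent is precisely $n/2$ with a linear base in $n$. The product over coordinates causes no loss, because for any $\alpha$ with $|\alpha|=n$ one has $\prod_i (C(1+\alpha_i))^{\alpha_i/2} \leq (C(1+n))^{n/2}$ thanks to $\alpha_i \leq n$ and $\sum_i \alpha_i = n$.
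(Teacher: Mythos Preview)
Your proof is correct and follows essentially the same architecture as the paper's: scale to a fixed time, use the tensor factorization of the Gaussian to reduce to a one-dimensional estimate, and express the one-dimensional derivatives via Hermite polynomials. The only difference is in how the one-dimensional integral $\int_\R |He_k(u)|\,e^{-u^2/2}\,du$ is bounded: the paper uses the Hermite three-term recursion to run an induction on $I_{m,n}=\int |t|^m|H_n(t)|e^{-t^2}\,dt$, whereas you apply Cauchy--Schwarz together with the orthogonality identity $\int He_k^2\,e^{-u^2/2}\,du=k!\sqrt{2\pi}$ and then Stirling, which is a bit more direct.
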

\begin{proof}
By rescaling, it suffices to establish the lemma for $t=1/4$. Denote 
\begin{equation*}
\varphi(x):= \exp\left( -\left| x \right|^2 \right) = \pi^{\frac d2} \Phi\left(x,\frac14\right).
\end{equation*}

\smallskip

\emph{Step 1.} We derive an expression for $\nabla^n\varphi$. Recall that the sequence of Hermite polynomials $\{ H_n(t)\}_{n\in\N}$ is given by the recursion formula
\begin{equation*}
\left\{ 
\begin{aligned}
& H_{n+1} (t) = 2t H_n(t) - 2n H_{n-1}(t), & n\in\N, \\
& H_0(t) = 1,  \quad H_1(t) = t.
\end{aligned} 
\right.
\end{equation*}
They satisfy the expression
\begin{equation*}
H_n(t) = (-1)^n \exp(t^2) \frac{d^n}{dt^n} \left( \exp(-t^2) \right).
\end{equation*}
It follows then that we may express the derivatives of $\varphi$ in terms of $H_n(t)$ by
\begin{equation*}
\left( \frac{\partial}{\partial x_1} \right)^{n_1}\cdots \left( \frac{\partial}{\partial x_d}\right)^{n_d} \varphi(x) = \varphi(x)  \prod_{j=1}^d  H_{n_j}(x_j) .
\end{equation*}
Thus
\begin{equation}
\label{e.nablakhermite}
\left| \nabla^n \phi(x) \right| \leq \varphi(x) \sum_{n_1+\cdots+n_d = n} \ \left|  \prod_{j=1}^d  H_{n_j}(x_j)  \right|.
\end{equation}

\smallskip

\emph{Step 2.} We claim that, for every $m,n\in\N$,
\begin{equation}
\label{e.integralHermite}
I_{m,n}:=\int_{-\infty}^\infty |t|^m \left| H_n(t) \right| \exp\left( -t^2 \right)\,dt \\
 \leq C^n \left( 1+m+n \right)^{\frac{m+n}{2}}.
\end{equation}
By a routine computation, it is easy to check that this estimate holds for $m\in\N$ and $n\in\{0 ,1\}$. Indeed, we have 
\begin{equation*}
I_{2m,0} \leq m! \sqrt{\pi}
\end{equation*}
and $I_{2m,1} = I_{2m+1,0}$. The recursion formula for $H_n(t)$ gives the bound 
\begin{equation*}
I_{m,n+1} \leq 2I_{m+1,n} + 2n I_{m,n-1} \quad \mbox{for all} \ m,n\in\N,
\end{equation*}
and from this we can easily check that the estimate holds for $m\in\N$ and $n=n_0+1$ provided it holds for $m\in\N$ and $n\in \{n_0-1,n_0\}$. We therefore obtain~\eqref{e.integralHermite} by induction. 

\smallskip

\emph{Step 3.} We complete the argument. Using~\eqref{e.nablakhermite} and~\eqref{e.integralHermite} with $m=0$, we compute
\begin{align*}
\int_{\Rd} \left| \nabla^n \varphi(x) \right|\,dx 
& \leq  \sum_{n_1+\cdots n_d=n} d^n \ \prod_{j=1}^d \int_{-\infty}^\infty \left| H_{n_j} (t) \right| \exp\left( -t^2 \right)\,dt \\
& \leq\sum_{n_1+\cdots n_d=n} C^n \left( (1+n \right)^{\frac n2}.
\end{align*}
Since the number of elements in the sum is 
\begin{equation*}
\frac{n!}{d! (n-d)!} \leq n^d \leq (Cd)^n \leq C^n,  
\end{equation*}
the proof is complete. 
\end{proof}

\section{Quantitative weak convergence}
\label{s.proofs}

In this section, we prove Theorem~\ref{t.correctors}. We work with the standard regularization  of the corrector equation
\begin{equation*}
\ep^2 \phi_\ep - \nabla \cdot\left( \a(x)\left( e+\nabla \phi_\ep \right)\right) = 0\quad \mbox{in} \ \Rd. 
\end{equation*}
We call $\phi_\ep$ the \emph{approximate correctors}. We proceed by obtaining estimates on $\phi_\ep$ and its gradient and eventually passing to the limit $\ep \to 0$. The argument has roughly three steps: (i) use the regularity theory developed in~\cite{ASh} to estimate $\omega_k(\nabla \phi_\ep,R)$ in terms of $\rho_k(\a,R)$, thereby transferring quantitative ergodic information from the coefficients directly to the gradients of the approximate correctors; (ii) apply the ergodic theorem in the form 
of \pref{ergodicthm} to get control of spatial averages of $\nabla \phi_\ep$; and (iii) apply a functional inequality (Proposition~\ref{p.heatfuckingflow} below) to obtain pointwise control of $\phi_\ep$ which will be independent of the parameter~$\ep$. 

\smallskip

We set up the proof in the next two subsections by introducing the main ingredients for steps (ii) and (iii). In Section~\ref{ss.phiep} we present the main argument. 

\subsection{Regularity theory}
A regularity theory for periodic homogenization was developed by Avellaneda and Lin~\cite{AL1,AL2} using compactness methods. Their arguments require the existence of a bounded corrector and hence are not applicable in our setting. Recently, a new quantitative argument was introduced by the first author and Smart~\cite{AS}, in the context of random homogenization, which does not require compactness or bounds on correctors. This lead to the development of a regularity theory for homogenization 
in the almost periodic case~\cite{ASh} in addition to the stochastic setting~\cite{AM,GNO2}.

\smallskip

The major ingredients we need from the regularity theory are uniform Lipschitz and Calder\'on-Zygmund estimates proved in~\cite{ASh}. These give us the control on the gradient of the approximate correctors required to apply Proposition~\ref{p.ergodicthm}. The statements here are given in a slightly more general form than what is presented in~\cite{ASh}. The modifications which are needed are explained below in Remark~\ref{r.modifications}. In what follows, $p_*$ is the Sobolev exponent $p_* :=\frac{dp}{d+p}$, defined for any $p\in[1,\infty)$, and $p^*:= \frac{dp}{d-p}$, which is defined for $p<d$. We also take $d^*$ to be any finite exponent and $p^*=\infty$ if $p>d$. 

\smallskip

\begin{proposition}[{\cite[Theorems 4.6 and~5.1]{ASh}}]
\label{p.lipschitz}
Assume~$\a:\Rd \to \R^{d\times d}$ satisfies~\eqref{e.ue} and~\eqref{e.aholder}
and that the modulus $\rho_1$ defined in~\eqref{e.rho} satisfies, for some $M\geq 1$ and $\beta > \frac52$,
\begin{equation*} \label{}
\rho_1(R) \leq M \left( \log R \right)^{-\beta} \quad \mbox{for every} \ R\geq 2.
\end{equation*}
Let  $R \geq 1$, $p\in [2,\infty)$, $\alpha \in (0,1]$, $\mathbf{f}\in L^p(B_R;\Rd)$, $g\in L^{p_*}(B_R)$ and $u\in H^1(B_R)$ be a solution of the equation
\begin{equation*} \label{}
-\nabla \cdot\left( \a(x) \nabla u(x) \right) = g +  \nabla \cdot \mathbf{f}  \quad \mbox{in} \ B_R.
\end{equation*}
Then there exists $C(p,M,\beta,d,\Lambda,\gamma) \geq 1$ such that
\begin{multline} \label{e.CZ}
\left( \fint_{B_{R/2}} \left| \nabla u(x) \right|^p \right)^{\frac1p} \\
\leq C \left( \frac1R \left( \fint_{B_R} \left| u(x) \right|^2\,dx \right)^{\frac12} + \left( \fint_{B_{R}} \left| \mathbf{f}(x) \right|^p \,dx \right)^{\frac1p} + R \left( \fint_{B_{R}} \left| g(x) \right|^{p_*} \,dx\right)^{\frac1{p_*}} \right)
\end{multline}
and, if $\mathbf{f} \in C^{0,\alpha}(B_R)$ and $g \in L^q(B_R)$ for some $q>d$, then, with $C$ depending additionally on $\alpha$ and $q$, we have
\begin{multline} \label{e.Lip}
\sup_{x\in B_{R/2}} \left| \nabla u (x) \right| \\
\leq C \left( \frac 1R \left( \fint_{B_R} \left| u(x) \right|^2\,dx \right)^{\frac12} 
+ R^{\alpha} \left[ \mathbf{f} \right]_{C^{0,\alpha}(B_R)}  
+ R \left( \fint_{B_{R}} \left| g(x) \right|^{q} \,dx \right)^{\frac1{q}} \right).
\end{multline}
\end{proposition}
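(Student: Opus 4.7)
Since the statement is quoted essentially verbatim from~\cite{ASh}, the plan is to follow the strategy of that paper, which is itself a quantitative adaptation of the Avellaneda--Lin compactness method in the spirit of~\cite{AS}. The overall approach splits the regularity into two regimes: a \emph{large-scale} theory at scales $r \geq r_0$, where $r_0 \geq 1$ is a threshold determined by the modulus $\rho_1$, to be glued onto the classical small-scale Schauder and Calder\'on--Zygmund theory that is available at scales $r \leq r_0$ from the H\"older assumption~\eqref{e.aholder}. The final bounds in~\eqref{e.CZ} and~\eqref{e.Lip} are then obtained by invoking the appropriate theory at the appropriate scale.

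The central analytic ingredient is a quantitative homogenization error. Specifically, if $\bar u$ denotes the solution on $B_R$ of the constant-coefficient homogenized equation with the same Dirichlet data as $u$, the plan is to prove
\begin{equation*}
\|u-\bar u\|_{L^2(B_R)} \leq \eta(R)\, R \Bigl(\fint_{B_{2R}} |\nabla u|^2\Bigr)^{\!1/2},
\end{equation*}
with $\eta(R) \to 0$ at a rate quantified in terms of $\rho_1(\a,\cdot)$. I would obtain this via the standard two-scale expansion $\bar u + \phi_k\,\partial_k \bar u$, using sublinearity of the first-order correctors and flux correctors in the almost periodic setting (proved separately under the $\rho_1$ assumption). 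With the homogenization error in hand, one sets up a one-step excess-decay dichotomy: for any $\theta \in (0,1/2)$, there exists $r_0(\theta)\geq 1$ such that for all $r \geq r_0(\theta)$,
\begin{equation*}
\inf_{p\in\Rd} \Bigl(\fint_{B_{\theta r}} |\nabla u - p|^2\Bigr)^{\!1/2} \leq \theta \inf_{p\in\Rd} \Bigl(\fint_{B_r} |\nabla u - p|^2\Bigr)^{\!1/2} + (\text{data}),
\end{equation*}
obtained by comparing $u$ to $\bar u$ on $B_r$ and exploiting the interior Lipschitz regularity of the smooth function $\bar u$. Iterating this dichotomy down from $R$ to the threshold scale $r_0$ yields the large-scale Lipschitz estimate, and combining with standard Schauder theory at scales below $r_0$ delivers~\eqref{e.Lip}.

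For the Calder\'on--Zygmund bound~\eqref{e.CZ}, the plan is to invoke the real-variable extrapolation method of Z.~Shen: it suffices to establish a weak reverse H\"older inequality
\begin{equation*}
\Bigl(\fint_{B_r} |\nabla u|^p\Bigr)^{\!1/p} \leq C\Bigl(\fint_{B_{2r}} |\nabla u|^2\Bigr)^{\!1/2} + (\text{forcing terms}),
\end{equation*}
at all scales $r \geq r_0$ and every finite $p$. This reverse H\"older estimate follows directly by applying the large-scale Lipschitz bound above to each $\a$-harmonic cut-off of $u$; Shen's lemma then upgrades the $L^2$ gradient bound to $L^p$ with no further PDE input, and the forcing terms $\mathbf f$ and $g$ are accommodated by the duality in $p_*$.

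The main obstacle is forcing the homogenization error $\eta(R)$ to decay \emph{fast enough} that the dyadic iteration of the excess-decay dichotomy converges down to the threshold~$r_0$. Summability across the infinitely many dyadic scales imposes a quantitative lower bound on the decay of~$\rho_1$, and the threshold $\beta > \tfrac52$ in the hypothesis reflects the interplay between the $L^2$-to-$L^\infty$ loss via Caccioppoli, the loss inherent in the two-scale expansion (which gives an $O(\rho_1^{1/2})$ rather than $O(\rho_1)$ error in general), and the counting of scales. Pinning down the exact power of $\log$ needed to close the iteration is the genuinely delicate step of~\cite{ASh}; the only modifications required here are cosmetic adjustments to Shen's real-variable lemma so that the right-hand sides match the slightly more general form stated in~\eqref{e.CZ} and~\eqref{e.Lip}.
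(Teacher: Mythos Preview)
Your outline is a fair summary of the method of~\cite{ASh}, but you should be aware that the paper does not actually \emph{prove} Proposition~\ref{p.lipschitz}: it is quoted directly from~\cite[Theorems~4.6 and~5.1]{ASh}, and the only argument the paper supplies is the short Remark~\ref{r.modifications} immediately following the statement. That remark addresses the single discrepancy between the quoted form and what appears in~\cite{ASh}, namely the presence of the zeroth-order right-hand side~$g$ (in~\cite{ASh} the estimates are stated for $g=0$). Rather than modify Shen's real-variable lemma as you suggest, the paper simply absorbs~$g$ into the divergence term: one solves the Dirichlet problem $-\Delta w = g$ in $B_R$, sets $\mathbf g := -\nabla w$, and observes via standard Calder\'on--Zygmund and Sobolev/Morrey estimates that
\[
\left(\fint_{B_R}|\mathbf g|^p\right)^{1/p} \le C R\left(\fint_{B_R}|g|^{p_*}\right)^{1/p_*}
\quad\text{and}\quad
R^{\alpha'}[\mathbf g]_{C^{0,\alpha'}(B_R)} \le C R\left(\fint_{B_R}|g|^q\right)^{1/q},
\]
with $\alpha'=1-d/q$, so the $g=0$ case of~\cite{ASh} applies verbatim to $\mathbf f + \mathbf g$.

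As a sketch of the underlying proof in~\cite{ASh}, your plan is broadly accurate (quantitative two-scale-expansion error, excess-decay iteration down to a threshold scale, classical Schauder below the threshold, and Shen's extrapolation for $L^p$). One caveat: in the almost periodic setting of~\cite{ASh} one does not have bounded correctors a priori---indeed, obtaining them is the whole point of the present paper---so the homogenization error is controlled via the \emph{approximate} correctors $\phi_\ep$ and the quantity $\rho_1$, not via ``sublinearity of first-order correctors'' as you wrote. But since the paper treats the proposition as a black box from~\cite{ASh}, none of this needs to be reproduced here.
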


\begin{remark}
\label{r.modifications}
The estimates in the above proposition were stated and proved in~\cite{ASh} in the case $g=0$. However, this is no less general Proposition~\ref{p.lipschitz}, in view of the fact that any $g\in L^{q}(B_R)$ can be written as 
\begin{equation*}
g = \nabla \cdot \mathbf{g}
\end{equation*}
 where $\mathbf{g} := -\nabla u \in W^{1,q}(B_R)$ and $u \in W^{2,q}(B_R)$ is the solution the Dirichlet problem
 \begin{equation*}
\left\{ 
\begin{aligned}
& -\Delta u = g & \mbox{in} & \ B_R, \\
& u = 0 & \mbox{on} & \ \partial B_R. 
\end{aligned}
\right.
\end{equation*}
Indeed, by the standard Calderon-Zygmund and Sobolev estimates, we have, in the case $q=p_*$, 
\begin{equation*}
 \left( \fint_{B_{R}} \left| \mathbf{g}(x) \right|^p \,dx \right)^{\frac1p} \leq CR \left( \fint_{B_{R}} \left| g(x) \right|^{p_*}\,dx \right)^{\frac1{p_*}} 
\end{equation*}
and, for $q>d$, denoting $\alpha':=1-\frac dq$,
\begin{equation*}
 R^{\alpha'} \left[ \mathbf{g} \right]_{C^{0,\alpha'}(B_R)}
 \leq CR \left( \fint_{B_{R}} \left| g(x) \right|^{q}\,dx \right)^{\frac1{q}}.
\end{equation*}
In both estimates, the constant $C$ depends only on $d$ and $q$. 
Therefore we can absorb the $g$ into the divergence of the vector field. 
\end{remark}

For the remainder of this section, we assume the hypotheses of Theorem~\ref{t.correctors} are in force. We next put the regularity estimates in a form more suitable for our analysis. 

\begin{lemma} \label{l.aux u}
Fix $p\in [2,\infty)$. There exists $C(p,\data)\geq 1$ such that, for every $\ep \in (0,1]$, $\mathbf{f} \in L_{\mathrm{loc}}^p(\Rd;\Rd)$, $g\in L^{p_*}_{\mathrm{loc}}(\Rd)$ and $u \in H_{\rm loc}^1(\Rd)$ satisfying 
\begin{equation} \label{e.aux u}
\ep^2 u  - \nabla \cdot\left( \a(x) \nabla u \right)  = g +  \nabla \cdot \mathbf{f} \quad \mbox{in} \ \R^d ,
\end{equation}
we have the estimates
\begin{multline}
\label{e.auxu}
  \sup_{z' \in \R^d }  \left( \fint_{B_{1/\ep}(z')} \left| \nabla u(x) \right|^{p}  \, dx \right)^{\frac1p}  + \ep \sup_{\R^d } |u| \\ 
  \leq C \sup_{z' \in \R^d } \left(  \left( \fint_{B_{1/\ep}(z')} \left| \mathbf{f}(x) \right|^p \, dx \right)^{\frac1{p}} + \frac1\ep \left(  \fint_{B_{1/\ep}(z')} \left| g(x) \right|^{p_*} \, dx \right)^{\frac1{p_*}}  \right).
\end{multline}
Moreover, in the case that $\mathbf{f} \in C^{0,\alpha}_{\mathrm{loc}} (\R^d;\R^d)$ and $g \in L^q_{\mathrm{loc}}(\R^d)$ for some $\alpha\in (0,1]$ and $q>d$, we have the estimate
\begin{equation}
\label{e.auxunablauinfty}
 \left\| \nabla u \right \|_{L^\infty(\R^d)}   \leq  C\sup_{z' \in \R^d} \left( 
  \ep^{-\alpha} \left[ \mathbf{f} \right]_{C^{0,\alpha}(B_{1/\ep}(z'))} 
  +  \frac1\ep \left(  \fint_{B_{1/\ep}(z')} \left| g(x) \right|^{q} \, dx \right)^{\frac1{q}}  \right),
\end{equation}
with the constant $C$ depending additionally on $\alpha$ and $q$. 
\end{lemma}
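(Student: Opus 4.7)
The plan is to reduce \eqref{e.aux u} to an equation amenable to \pref{lipschitz} by absorbing the massive term $\ep^2 u$ into the source. Rewriting \eqref{e.aux u} as
\begin{equation*}
-\nabla\cdot(\a(x)\nabla u) = (g - \ep^2 u) + \nabla\cdot\mathbf{f}
\end{equation*}
and applying \eqref{e.CZ} on the ball $B_{1/\ep}(z')$ (which is the natural length scale of the operator $\ep^2-\nabla\cdot(\a\nabla\cdot)$), together with the splitting $|g-\ep^2 u|^{p_*}\leq 2^{p_*}(|g|^{p_*}+\ep^{2p_*}|u|^{p_*})$, yields
\begin{equation*}
\Ll(\fint_{B_{1/(2\ep)}(z')}|\nabla u|^p\Rr)^{\frac1p} \leq C\ep\sup_{\R^d}|u| + C\,H(z'),
\end{equation*}
where $H(z')$ denotes the $z'$-dependent term inside the supremum on the right side of \eqref{e.auxu}. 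An analogous inequality with $\|\nabla u\|_{L^\infty(B_{1/(2\ep)}(z'))}$ on the left and the right-hand side of \eqref{e.auxunablauinfty} in place of $H(z')$ follows from \eqref{e.Lip}. The proof therefore reduces to establishing the single $L^\infty$ bound
\begin{equation*}
\ep\sup_{\R^d}|u|\leq C\sup_{z'\in\R^d} H(z'),
\end{equation*}
which, once inserted into the two preceding displays, absorbs the $\ep\sup|u|$ term and simultaneously yields \eqref{e.auxu} and \eqref{e.auxunablauinfty}.

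For the $L^\infty$ bound on $u$, the key observation is that the resolvent of $\ep^2-\nabla\cdot(\a\nabla\cdot)$ has exponential off-diagonal decay at scale $1/\ep$. I would quantify this via an Agmon-weighted Caccioppoli estimate: for fixed $z_0\in\R^d$, test \eqref{e.aux u} against $u\eta^2$ with the weight $\eta(x):=\exp(-c\ep|x-z_0|)$, where $c>0$ is chosen small depending only on $\Lambda$. Ellipticity, Young's inequality, and the identity $|\nabla\eta|=c\ep\eta$ allow the cross terms to be absorbed into the $\ep^2\int u^2\eta^2$ contribution, yielding
\begin{equation*}
\ep^2\int_{\R^d}u^2\eta^2 + \int_{\R^d}|\nabla u|^2\eta^2 \leq C\int_{\R^d}\Ll(|\mathbf{f}|^2 + \ep^{-2}|g|^2\Rr)\eta^2.
\end{equation*}
A dyadic shell decomposition $\R^d=B_{1/\ep}(z_0)\cup\bigcup_{k\geq 1}(B_{2^k/\ep}(z_0)\setminus B_{2^{k-1}/\ep}(z_0))$, combined with H\"older's inequality on each shell and the geometric summability of $\sum_k 2^{kd}e^{-c2^k}$, bounds the right-hand side by $C\ep^{-d}\sup_{z'}H(z')^2$. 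Restricting the integral on the left to $B_{1/\ep}(z_0)$, where $\eta\gtrsim 1$, gives the localized $L^2$ bound $\ep\,(\fint_{B_{1/\ep}(z_0)}u^2)^{1/2}\leq C\sup_{z'}H(z')$. A standard $L^2\to L^\infty$ Moser iteration applied to the rewritten elliptic equation on $B_{1/\ep}(z_0)$ (possibly after a Sobolev bootstrap in $p$ to push the forcing into the correct integrability class) upgrades this to the pointwise bound $\ep|u(z_0)|\leq C\sup_{z'}H(z')$; taking the supremum over $z_0$ completes the argument.

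The main technical obstacle is the $L^\infty$ estimate on $u$ with the correct $\ep$-scaling. The Agmon-weighted Caccioppoli is the natural tool for capturing the exponential localization at scale $1/\ep$ inherent to the massive operator; the delicate step is the shell decomposition, which requires H\"older's inequality on annuli of volume $\sim(2^k/\ep)^d$ together with the super-exponential decay of $\eta^2$ to produce a convergent geometric series. Everything else is routine: \pref{lipschitz} supplies the required $L^p$ and Lipschitz bounds on $\nabla u$ as soon as the massive term has been absorbed and the $L^\infty$ bound on $u$ is in hand.
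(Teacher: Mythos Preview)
Your approach is essentially the same as the paper's: both reduce to an $L^\infty$ bound on $u$ via \eqref{e.CZ}/\eqref{e.Lip}, obtain the initial $L^2$ control by testing against $u$ times an exponential weight $\eta(x)=\exp(-c\ep|x-z_0|)$ (the paper takes $c=\ep/(2\Lambda)$), and then upgrade $L^2$ to $L^\infty$ on balls of radius $1/\ep$. Two points of difference are worth flagging.

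First, your Agmon estimate contains the term $\ep^{-2}\int|g|^2\eta^2$, but for $p<2^*$ one has $p_*<2$ and $g$ need not lie in $L^2_{\mathrm{loc}}$, so this integral may be infinite. The paper sidesteps this by invoking Remark~\ref{r.modifications} at the outset to absorb $g$ into the divergence term and work with $g=0$ throughout; you should do the same.

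Second, for the $L^2\to L^\infty$ step you propose De Giorgi--Nash--Moser iteration, whereas the paper iterates \eqref{e.CZ} from Proposition~\ref{p.lipschitz} together with Sobolev--Poincar\'e, climbing the ladder $p_0=2$, $p_{j+1}=p_j^*$ on a shrinking sequence of balls until Morrey's inequality applies. Your route is more elementary and perfectly valid in the scalar setting, but it does not survive the passage to systems (which the paper explicitly claims to cover), since Moser iteration relies on testing against powers of $u$. The paper's bootstrap via the homogenization Calder\'on--Zygmund estimate \eqref{e.CZ} is what makes the argument go through verbatim for systems. Both approaches share the same need for $\mathbf{f}$ to have integrability above $L^d$ in the intermediate steps, which in the applications is harmless since $\mathbf{f}$ is always bounded.
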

\begin{proof}
In view of Remark~\ref{r.modifications}, we can assume $g=0$. Test the equation~\eqref{e.aux u} with $u \psi^2$, where $\psi \in C_0^\infty(\R^d)$ to get
\begin{equation*} 
\frac12 \ep^2 \int_{\R^d} u^2 \psi^2 \, dx + \frac14 \int_{\R^d} |\nabla u|^2 \psi^2 \, dx \\ \leq \frac34  \Lambda^2 \int_{\R^d} u^2 |\nabla \psi|^2 \, dx  +  4 \int_{\R^d} |f|^2 \psi^2 \, dx.
\end{equation*}
Taking first $\psi \equiv \tilde \psi$, $\tilde \psi(x) = \exp(-R^{-1} |x -z'|)$, with $z' \in \R^d$ and $R = 2\Lambda/\ep$, we see that the first term on the right can be absorbed to the first term on the left, and we arrive after straightforward manipulations to the inequality
\begin{equation} \label{e.u initial 000}
\sup_{z ' \in \R^d} \left( \ep^2 \fint_{B_{2/\ep}(z')} u^2 \, dx +  \fint_{B_{2/\ep}(z')} |\nabla u|^2 \, dx \right) 
 \leq C \sup_{z' \in \R^d} \fint_{B_{1/\ep}(z')}|f|^2  \, dx.
\end{equation}
We now fix $z \in \R^d$ and take $1/\ep < R_{j+1} < R_j \leq 2/\ep$ such that $R_{j}-R_{j+1} \leq 2^{-j-2} /\ep$ and $R_0 = 2/\ep$. Denote, in short, $B_{R_j} = B_{R_j}(z)$. Letting $w_j \in H_0^1(B_{R_j})$ solve $-\Delta w_j = \ep^2 u$ in $B_{R_j}$, the equation of $u$ takes the form 
\begin{equation*} 
- \nabla \cdot\left( \a(x) \nabla u \right)  = \nabla \cdot \left( \mathbf{f} + \nabla w_j \right) \quad \mbox{in} \ B_{R_j}\,.
\end{equation*}
By the standard Calderon-Zygmund theory (as in Remark~\ref{r.modifications}) we have that 
\begin{equation} \label{e.w eq 000}
\left( \fint_{B_{R_j}} |\nabla w_j|^p \, dx \right)^{\frac 1p} 
\leq C   \left(   \fint_{B_{R_j}} \left(\ep |u| \right)^{p_*} \, dx \right)^{\frac 1{p_*}}
\end{equation}
for any $p \geq 2$. Combining this with~\eqref{e.u initial 000} 
gives us for $p=2$ that
\begin{equation*} 
\left( \fint_{B_{R_0}} |\nabla w_0|^{2} \, dx \right)^{\frac 1{2}} \leq C \sup_{z' \in \R^d} \left( \fint_{B_{1/\ep}(z')} |\mathbf{f}|^2  \, dx \right)^{\frac12 }.
\end{equation*}
Next, an application of Proposition~\ref{p.lipschitz} together with a covering argument shows that 
\begin{equation}  \label{e.CZ 000}
\left( \fint_{B_{R_{j+1}}} \left| \nabla u \right|^{p}  \, dx \right)^{\frac1p} \\ 
\leq C 2^{j d/p} \left( \ep \left( \fint_{B_{R_j}} \left| u \right|^p\,dx \right)^{\frac1p} + \left( \fint_{B_{R_j}} \left| \mathbf{f} \right|^{p} \, dx  \right)^{\frac1p} \right)\,,
\end{equation}
and we get  by Sobolev-Poincar\'e inequality and H\"older's inequality that
\begin{equation*} 
\left( \fint_{B_{R_{j+1}}} \left|u \right|^{p^*}  \, dx \right)^{\frac1{p^*}}  \leq C 2^{j d/p}  \left(  \left( \fint_{B_{R_j}} \left| u \right|^p\,dx \right)^{\frac1p} + 
\frac1{\ep} \left( \fint_{B_{R}}  |\mathbf{f}|^p \, dx \right)^{\frac1{p}} \right).
\end{equation*}
We can now start iteration with $p \equiv p_j$, where $p_0 = 2$ and $p_j = p_{j-1}^*$ for $j \in \N$. Then, after finitely many steps, we may conclude  by Morrey's inequality that 
\begin{equation*} 
\sup_{B_{R_j}} |u| \leq C  \left(  \left( \fint_{B_{R_0}} \left| u(x) \right|^2 \,dx \right)^{\frac12} + \frac1\ep \left( \fint_{B_{R_0}} |\mathbf{f}|^p \, dx \right)^{\frac1{p}} \right)\,.
\end{equation*}
with any $p > d$. Applying~\eqref{e.u initial 000} once more leads to 
$$
\sup_{B_{R_j}} |u| \leq \frac{C}{\ep} \sup_{z' \in \R^d}  \left( \fint_{B_{1/\ep}(z')} |\mathbf{f}|^p \, dx \right)^{\frac1{p}} 
$$
Inserting this into~\eqref{e.CZ 000} and iterating to reach any arbitrary $p>d$ concludes the proof of~\eqref{e.auxu}. The proof of~\eqref{e.auxunablauinfty} follows from the previous estimate and~\eqref{e.Lip}. 
\end{proof}

\subsection{Multiscale Poincar\'e inequality}\label{s.poinca}
The following functional inequality can be compared to~\cite[Proposition 6.1]{AKM}. It gives an estimate for the oscillation of an arbitrary bounded and Lipschitz function in terms of the spatial averages of its gradient on every scale (and the spatial averages are given in terms of convolution against the heat kernel). 

\begin{proposition}
\label{p.heatfuckingflow}
For every $u\in W^{1,\infty}(\Rd)$,
\begin{equation*}
\osc_{\Rd} u \leq 2 \int_0^\infty \sup_{y\in\Rd} \left| \int_{\Rd}  \nabla u(x) \cdot \nabla \Phi (y-x,t) \,dx \right| \,dt.
\end{equation*}
\end{proposition}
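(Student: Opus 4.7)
The approach is to represent $u$ via the heat semigroup and extract the oscillation from its time derivative. Define $v(y,t) := (\Phi(\cdot,t)*u)(y) = \int_{\Rd} u(x)\Phi(y-x,t)\,dx$, so that $v(\cdot,0) = u$ pointwise (since $u$ is continuous, being Lipschitz) and $v$ solves $\partial_t v = \Delta_y v$ on $\Rd \times (0,\infty)$. Integrating by parts once in $x$, using $\nabla_y\Phi(y-x,t) = -\nabla_x\Phi(y-x,t)$ together with the gaussian decay of $\Phi$ at infinity, I compute
\[
\partial_t v(y,t) \;=\; \int_{\Rd} \nabla u(x) \cdot \nabla \Phi(y-x,t)\,dx \;=:\; F(y,t),
\]
which is exactly the integrand appearing on the right-hand side of the claimed inequality.

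The next step is the fundamental theorem of calculus: for any $y_1,y_2\in\Rd$ and any $T>0$,
\[
u(y_1) - u(y_2) \;=\; \bigl[ v(y_1,T) - v(y_2,T) \bigr] \,-\, \int_0^T \bigl[ F(y_1,t) - F(y_2,t) \bigr]\,dt.
\]
I would then pass to the limit $T\to\infty$. To kill the boundary term, I write
\[
v(y_1,T) - v(y_2,T) \;=\; \int_0^1 (y_1-y_2)\cdot \int_{\Rd} u(x)\, \nabla_y \Phi\bigl( y_2 + s(y_1-y_2) - x,\, T\bigr)\,dx\,ds,
\]
and combine $\|u\|_{L^\infty} < \infty$ (from $u \in W^{1,\infty}$) with \lref{heatkernelL1} applied at $n=1$ to get the bound $|v(y_1,T) - v(y_2,T)| \leq C|y_1 - y_2|\,\|u\|_{L^\infty}\,T^{-1/2}$, which vanishes as $T\to\infty$.

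Combining these and bounding each $|F(y_i,t)|$ by $\sup_{y\in\Rd}|F(y,t)|$ yields
\[
|u(y_1) - u(y_2)| \;\leq\; 2\int_0^\infty \sup_{y\in\Rd}|F(y,t)|\,dt
\]
(trivially true if the right-hand side is infinite, and justified by monotone convergence otherwise). Taking the supremum over $y_1,y_2\in\Rd$ concludes. The only delicate points are the justification of the integration by parts (handled by the pointwise bound on $\nabla u$ together with the gaussian decay of $\nabla\Phi$) and the vanishing of the boundary term at $T=\infty$ (handled by the mean value argument above using $\|u\|_{L^\infty}$); there is no substantial obstacle beyond these standard verifications.
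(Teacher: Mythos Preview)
Your proof is correct. Both your argument and the paper's use the heat flow and the identity $\partial_t v = \int \nabla u \cdot \nabla\Phi$, but the roles are swapped: the paper runs the heat semigroup on a mean-zero test function $g\in L^1(\Rd)$, computes $\frac{d}{dt}\int u\,w(\cdot,t)$, uses that $\|w(\cdot,t)\|_{L^\infty}\to 0$ (from $\int g=0$), and then recovers the oscillation by taking the supremum over such $g$ with $\|g\|_{L^1}\le 1$. You instead evolve $u$ itself and compare directly at two points $y_1,y_2$, killing the boundary term at $T=\infty$ via the pointwise bound $|\nabla_y v(\cdot,T)|\le C\|u\|_{L^\infty}T^{-1/2}$. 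Your route is essentially the paper's duality argument specialized to $g=\delta_{y_1}-\delta_{y_2}$, but carried out directly without invoking duality; this makes it slightly more elementary and avoids the extra step of characterizing $\osc_{\Rd} u$ as a supremum over mean-zero $L^1$ functions. The paper's formulation, on the other hand, would adapt more readily if one wanted to replace the oscillation by a pairing against a broader class of test functions.
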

\begin{proof}
Select $g\in L^1(\Rd)$ such that  
\begin{equation}
\label{e.gmeanzero}
\int_{\Rd} g(x) \,dx = 0.
\end{equation}
Consider the solution $w$ of the Cauchy problem
\begin{equation*}
\left\{ \begin{aligned}
& \partial_t w - \Delta w = 0 & \mbox{in} & \ \Rd \times (0,\infty), \\
& w(\cdot,0) = g & \mbox{in} & \ \Rd. 
\end{aligned} \right.
\end{equation*}
Observe that, for every $t>0$, we have $w(\cdot,t) \in C^\infty(\Rd) \cap L^1(\Rd)$. Define
\begin{equation*}
\phi(t):= \int_{\Rd} u(x) w(x,t)\,dx
\end{equation*}
and compute
\begin{align*}
\frac{d}{ds} \phi(s) 
& = \int_{\Rd} u(x) \partial_t w(x,s)\,dx \\
& = \int_{\Rd} u(x) \Delta w(x,s)\,dx \\
& = -\int_{\Rd} \nabla u(x) \cdot \nabla w(x,s)\,dx. 
\end{align*}
Inserting the representation formula
\begin{equation*}
\nabla w(x,t) = \int_{\Rd} g(y) \nabla \Phi(x-y,t)\,dy
\end{equation*}
we obtain 
\begin{equation*}
\frac{d}{ds} \phi(s)  =  \int_{\Rd} g(y) \int_{\Rd} \nabla u(x) \cdot \nabla \Phi(x-y,t)\,dx \, dy
\end{equation*}
and then 
\begin{align*}
\left|\frac{d}{ds} \phi(s)  \right| 
& \leq  \int_{\Rd} \left| g(y) \right| \cdot\left|  \int_{\Rd} \nabla u(x) \cdot \nabla \Phi(x-y,t)\,dx \right| \, dy \\
& \leq \| g \|_{L^1(\Rd)} \, \sup_{y\in\Rd} \left| \int_{\Rd}\nabla u(x) \cdot \nabla \Phi(x-y,t)\,dx \right|.
\end{align*}
By~\eqref{e.gmeanzero}, we have 
\begin{equation*}
\lim_{t\to \infty} \| w(\cdot,t) \|_{L^\infty(\Rd)} = 0. 
\end{equation*}
Thus 
\begin{align*}
\left| \int_{\Rd} u(x) g(x) \,dx \right| 
& = \left| \int_{0}^\infty \frac{d}{ds} \phi(s)\,ds \right| \\
& \leq \| g \|_{L^1(\Rd)} \int_0^\infty \sup_{y\in\Rd} \left| \int_{\Rd}\nabla u(x) \cdot \nabla \Phi(x-y,t)\,dx \right|\,dt.
\end{align*}
Taking the supremum over $g\in L^1(\Rd)$ satisfying~\eqref{e.gmeanzero} and $\| g \|_{L^1(\Rd)} \leq 1$, we get
\begin{equation*}
\osc_{\Rd} u \leq 2 \int_0^\infty \sup_{y\in\Rd} \left| \int_{\Rd}\nabla u(x) \cdot \nabla \Phi(x-y,t)\,dx \right|\,dt.
\end{equation*}
This completes the proof of the proposition. 
\end{proof}

\subsection{The proof of Theorem~\ref{t.correctors}}
\label{ss.phiep}

In this subsection, we study the approximate correctors $\phi_\ep$ which, for a fixed unit vector $e\in\partial B_1$, are the unique bounded and almost periodic solutions of the equation
\begin{equation}
\label{e.appcorrectors}
\ep^2 \phi_\ep - \nabla \cdot \left( \a(x) \left( e + \nabla \phi_\ep \right) \right) = 0 \quad \mbox{in} \ \Rd. 
\end{equation}
The existence and uniqueness of $\phi_\ep$ in the class of bounded and uniformly almost periodic functions is standard. See for instance~\cite[Section 5]{S}. 

\smallskip

We begin with some essentially well-known and basic estimates on the approximate correctors which follow from Proposition~\ref{p.lipschitz}. 

\begin{lemma}
There exists $C(\data)\geq 1$ such that, for every $\ep \in (0,1]$,
\begin{equation} \label{e.phieplip}
\ep \left\| \phi_\ep \right\|_{L^\infty(\Rd)} +  \left\| \nabla \phi_\ep \right\|_{L^\infty(\Rd)}   \leq C.
\end{equation}
\end{lemma}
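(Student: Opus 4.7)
My plan is to obtain the two bounds separately: the first is a soft consequence of Lemma~\ref{l.aux u}, while the second requires a small trick to circumvent what at first looks like a fatal blow-up.

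First, I would rewrite the approximate-corrector equation~\eqref{e.appcorrectors} as
\[
\ep^2 \phi_\ep - \nabla\cdot(\a\nabla\phi_\ep) = \nabla\cdot(\a e),
\]
which fits the hypotheses of Lemma~\ref{l.aux u} with source vector $\mathbf{f} = \a e$ and $g = 0$. Since $\|\a e\|_\infty \leq \Lambda$, inequality~\eqref{e.auxu} applied with $p = 2$ delivers at once the desired bound $\ep\|\phi_\ep\|_\infty \leq C$ together with the flat averaged $L^2$ gradient estimate
\[
\sup_{z'\in\Rd}\Bigl(\fint_{B_{1/\ep}(z')} |\nabla\phi_\ep|^2\,dx\Bigr)^{1/2} \leq C,
\]
which will be the workhorse for the second step.

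For the Lipschitz bound $\|\nabla\phi_\ep\|_\infty \leq C$, a direct application of~\eqref{e.auxunablauinfty} with $\mathbf{f} = \a e$ is fruitless: the H\"older seminorm $[\a e]_{C^{0,\gamma}(B_{1/\ep})}$ is of order one and the prefactor $\ep^{-\gamma}$ diverges. My idea is instead to absorb the troublesome source into the geometric unknown via the change of variable
\[
u(x) := \phi_\ep(x) + x\cdot e,
\]
which satisfies $\a\nabla u = \a(e+\nabla\phi_\ep)$, so that the corrector equation becomes
\[
-\nabla\cdot(\a\nabla u) = -\ep^2\phi_\ep \quad\text{in } \Rd.
\]
Crucially, this equation has $\mathbf{f}\equiv 0$; only the zero-th order term $g = -\ep^2\phi_\ep$ remains, and its size is controlled by Step~1: $\|g\|_\infty \leq \ep^2\|\phi_\ep\|_\infty \leq C\ep$.

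To conclude, for any $x_0\in\Rd$ I would set $v := u - \fint_{B_{1/\ep}(x_0)} u$ (which also solves $-\nabla\cdot(\a\nabla v)=-\ep^2\phi_\ep$ since $\nabla v=\nabla u$) and apply the Lipschitz estimate~\eqref{e.Lip} of Proposition~\ref{p.lipschitz} to $v$ on the ball $B_{1/\ep}(x_0)$ with $\mathbf{f}=0$ and some $q>d$. Poincar\'e's inequality for the mean-zero $v$, combined with $\nabla v = \nabla\phi_\ep + e$, converts the $L^2$ term into
\[
\ep\Bigl(\fint_{B_{1/\ep}(x_0)}|v|^2\Bigr)^{1/2} \leq C\Bigl(\fint_{B_{1/\ep}(x_0)}|\nabla\phi_\ep+e|^2\Bigr)^{1/2} \leq C,
\]
while the $g$ term is at most $\ep^{-1}\|g\|_\infty \leq C$. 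Thus $\sup_{B_{1/(2\ep)}(x_0)}|\nabla u|\leq C$ uniformly in $x_0$ and $\ep$, and the bound on $\nabla\phi_\ep$ follows from $|\nabla\phi_\ep|\leq|\nabla u|+1$ upon taking the supremum over $x_0$. The main obstacle is the $R^\gamma[\mathbf{f}]_{C^{0,\gamma}(B_R)}$ term in the large-scale Lipschitz estimate, which would destroy uniformity at scale $R = 1/\ep$; the elementary change of unknown $u = \phi_\ep + x\cdot e$ circumvents it by eliminating $\mathbf{f}$ entirely and leaving only the small zero-th order source.
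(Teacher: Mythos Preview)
Your proof is correct and shares the paper's key device: passing to the tilted function $u(x)=\phi_\ep(x)+x\cdot e$ so that the equation becomes $-\nabla\cdot(\a\nabla u)=-\ep^2\phi_\ep$ with $\mathbf{f}\equiv 0$, thereby avoiding the divergent $\ep^{-\gamma}[\a e]_{C^{0,\gamma}}$ term in the large-scale Lipschitz estimate.

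The organization differs slightly from the paper, and your route is in fact a little shorter. The paper begins with the energy estimate $\ep^2\sup_{z}\fint_{B_{1/\ep}(z)}\phi_\ep^2\le C$, then applies the Calder\'on--Zygmund estimate~\eqref{e.CZ} to the tilted function and iterates via Sobolev/Poincar\'e to reach $\phi_\ep\in L^{d+1}_{\mathrm{loc}}$ (needed so that $g=-\ep^2\phi_\ep\in L^q$ with $q>d$), only then invokes~\eqref{e.Lip}, and finally deduces $\ep\|\phi_\ep\|_\infty\le C$ from the gradient bound combined with the initial $L^2$ estimate. You instead cite Lemma~\ref{l.aux u} directly with $\mathbf{f}=\a e$ to obtain $\ep\|\phi_\ep\|_\infty\le C$ and the flat $L^2$ gradient bound up front; having $g\in L^\infty$ in hand lets you skip the Calder\'on--Zygmund iteration entirely and go straight to~\eqref{e.Lip}, using Poincar\'e once on the mean-zero $v$ to close. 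Both arguments rest on the same regularity input (Proposition~\ref{p.lipschitz}); your version simply black-boxes Lemma~\ref{l.aux u} rather than rederiving part of it.
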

\begin{proof}
The proof is similar to the proof of Lemma~\ref{l.aux u}, but the gradient bound requires a slight variation in the argument. As in the derivation of~\eqref{e.u initial 000} above, an integration by parts using an exponential cutoff function gives the bound
\begin{equation} \label{e.L2tobegin}
\ep^2 \sup_{z\in\Rd} \fint_{B_{1/\ep}(z)} \left| \phi_\ep(x) \right|^2\,dx \leq C.
\end{equation}
We may therefore apply~\eqref{e.CZ} with $g= -\ep^2\phi_\ep$ to the function $x\mapsto e \cdot x + \phi_\ep(x)$ to obtain, for every $p\in [2,\infty)$, $r \in (0,1]$ and $z\in\Rd$,
\begin{equation*} \label{}
\left( \fint_{B_{r/\ep}(z)} \left|  \nabla \phi_\ep(x)  \right|^p \,dx  \right)^{\frac1p} \leq C \left( 1  +  \ep \left( \fint_{B_{2r/\ep}(z)} \left| \phi_\ep(x) \right|^{p_*}\,dx \right)^{\frac1{p_*}}  \right).
\end{equation*}
The Poincar\'e inequality yields
\begin{equation*} \label{}
\left( \fint_{B_{r/\ep}(z)} \left|  \phi_\ep(x)  \right|^p \,dx  \right)^{\frac1p} \leq C \left( \frac1\ep  +  \left( \fint_{B_{2r/\ep}(z)} \left| \phi_\ep(x) \right|^{p_*}\,dx \right)^{\frac1{p_*}}  \right)
\end{equation*}
Applying this to the sequence defined recursively by $p_1:=2^*$ and $p_{j+1} := p_j^*$ and stopping after a finite number of iterations at the first $j$ for which $p_j \geq d+1$, we obtain
\begin{equation*} \label{}
\left( \fint_{B_{1/(2^{j}\ep)}(z)} \left|  \phi_\ep(x)  \right|^{d+1} \,dx  \right)^{\frac1{d+1}} \leq \frac C\ep. 
\end{equation*}
Now we apply~\eqref{e.Lip} with $g= -\ep^2\phi_\ep$ to the function $x\mapsto e \cdot x + \phi_\ep(x)$ to obtain, for every $z\in\Rd$,
\begin{equation*} \label{}
\left\| \nabla \phi_\ep \right\|_{L^\infty(B_{c/\ep}(z))} \leq C.
\end{equation*}
Taking the supremum over all $z$ yields
\begin{equation*} \label{}
\left\| \nabla \phi_\ep \right\|_{L^\infty(\Rd)} \leq C.
\end{equation*}
The estimate for $\ep \left\| \phi_\ep \right\|_{L^\infty(\Rd)}$ follows from the previous estimate and~\eqref{e.L2tobegin}.
\end{proof}

For future reference, we remark that 
\begin{equation}
\label{e.avgphiepzero}
\langle \phi_\ep \rangle = 0.
\end{equation}
Indeed, we have
\begin{equation*}
\ep^2 \left| \fint_{B_R} \phi_\ep(x) \, dx \right| = \frac1R \left| \fint_{\partial B_R} \a(x) (e + \nabla \phi_\ep(x)) \, dH^{n-1}(x)  \right| \leq \frac{C}{R} \to 0
\quad \mbox{as} \ R \to\infty.
\end{equation*}

\smallskip

The next step is to apply the regularity results in Lemma~\ref{l.aux u} to obtain an almost periodic modulus for the gradients of the approximate correctors. This lemma lies at the heart of the section and is the motivation for the definition of~$\rho_k$ in the introduction. Before formulating it, we define an auxiliary quantity to control $L^p$-norms  obtained via a recursive summation formula for differences. To this end, for $(y, z) \in \R^d \times \R^d$, set 
first $
F_1( \a ,  \left( (y, z) \right) )  :=   \left\|\diff_{yz} \a \right\|_{L^\infty} ,
$
and define then recursively 
\begin{equation} \label{e.F_k}
 F_j( \a ,\mathcal T_j) := \left\|\diff_{\mathcal T_j} \a \right\|_{L^\infty} 
+ \sum_{m=1}^{j-1} \sum_{\zeta \in \mathcal P_{m,j}} 
\left\|\diff_{ \zeta^c (\mathcal T_j)} \a \right\|_{L^\infty} F_{m}(\a ,\zeta(\mathcal T_j))
\end{equation}
for $j$-tuple $\mathcal T_j = \left((y_1,z_1),\ldots,(y_j,z_j) \right)$. 
It is  easy to see by induction that we have a rough bound
\begin{equation} \label{e.F_k-vs-G_k}
 F_k(\a ,\mathcal T_k) \leq 2^k k! G_k(\a ,\mathcal T_k)
\end{equation}
for all $k \in \N$ and $\mathcal T_k \in (\R^d \times \R^d)^k$. Here the quantity $G_k(\a,\mathcal T_k)$ is defined by \eqref{e.G_k} in Section~\ref{s.introduction}. Indeed, to prove~\eqref{e.F_k-vs-G_k}, we assume inductively that the sum in $F_{i}(\a ,\zeta(\mathcal T_k))$ has at most $2^i i!$ terms, which is certainly true for $i=1$. Since $\mathcal P_{m,k}$ consists  of $\binom km$ different partitions, we see by the definition of $G_k(\a,\mathcal T_k)$ and induction assumption that 
$$
\sum_{\zeta \in \mathcal P_{m,k}}  \left\|\diff_{ \zeta^c (\mathcal T_j)} \a \right\|_{L^\infty} F_{m}(\a ,\zeta(\mathcal T_j)) \leq \binom km 2^m m! G_k(\a,\mathcal T_k) \leq 2^m k! G_k(\a,\mathcal T_k)\,,
$$
and thus
$$
 F_k( \a ,\mathcal T_k) \leq \left\|\diff_{\mathcal T_k} \a \right\|_{L^\infty} + \sum_{m=1}^{k-1} 2^m k! G_k(\a,\mathcal T_k) \leq 2^k k! G_k(\a,\mathcal T_k)\,,
$$
showing the induction step. 

\smallskip

The following lemma can be compared to \cite[Lemma~4.1 \& Proposition~4.6]{DG} in the random setting (with $\Delta_{xy}$ replaced by the Glauber derivative).

\begin{lemma}
\label{l.babymod}
Fix $\alpha\in \left(0,\frac12\right]$ and $p\in [1,\infty)$. Then there exists~$C(\alpha,p,\data)\geq 1$ such that, for each $k\in\N$, $\ep \in (0,1]$, $s \in [1,1/\ep]$, $\mathcal T_k = \left(  (y_1,z_1), \ldots, (y_k,z_k) \right) 
\in (\Rd \times \Rd)^k$,
\begin{equation}\label{e.softpermute}
\sup_{z' \in \Rd} \left( \fint_{B_s(z')} \left| \diff_{\mathcal T_k} \nabla \phi_\ep(x) \right|^p\,dx \right)^{\frac1p}  \leq C^k (s \ep)^{-\alpha} F_k\left(\a ,\mathcal T_k\right)\,,
\end{equation}
where $F_k$ is defined in~\eqref{e.F_k}.
\end{lemma}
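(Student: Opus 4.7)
The plan is to proceed by strong induction on~$k$. The base case $k=0$ (with the convention $\diff_{\emptyset}\nabla\phi_\ep = \nabla\phi_\ep$ and $F_0=1$) is exactly the a priori bound \eqref{e.phieplip}. For the inductive step, I first derive a PDE satisfied by $u_k := \diff_{\mathcal T_k}\phi_\ep$ by applying $\diff_{\mathcal T_k}$ to \eqref{e.appcorrectors} and iteratively expanding via the Leibniz-type identity
$$\diff_{yz}(fg) = (\diff_{yz}f)(T_y g) + (T_z f)(\diff_{yz}g).$$
Since $\diff_{y_iz_i}e=0$ and differences commute with $\nabla$, this gives
$$\ep^2 u_k - \nabla\cdot(\tilde{\a}_k\nabla u_k) = \nabla\cdot \mathbf{f}_k,$$
where $\tilde{\a}_k$ is a translate of $\a$ (and thus satisfies all hypotheses of \tref{correctors} with identical constants) and $\mathbf{f}_k$ is a finite sum, indexed by the non-empty subsets $\zeta\subseteq\{1,\ldots,k\}$ produced by the Leibniz expansion, of terms of the form $(\text{translate of }\diff_{\zeta(\mathcal T_k)}\a)\cdot(\text{translate of }\diff_{\zeta^c(\mathcal T_k)}(e+\nabla\phi_\ep))$.

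Next, I apply the $L^q$ estimate \eqref{e.auxu} of Lemma~\ref{l.aux u} to this equation with $q$ to be chosen later, obtaining
$$\sup_{z'}\Bigl(\fint_{B_{1/\ep}(z')}|\nabla u_k|^q\,dx\Bigr)^{1/q} \leq C\sup_{z'}\Bigl(\fint_{B_{1/\ep}(z')}|\mathbf{f}_k|^q\,dx\Bigr)^{1/q}.$$
For each term in $\mathbf{f}_k$, I pull out the $\diff_{\zeta(\mathcal T_k)}\a$ factor in $L^\infty$. When $\zeta=\{1,\ldots,k\}$ (so $\zeta^c=\emptyset$), the other factor is $e+\nabla\phi_\ep$ (translated), bounded by \eqref{e.phieplip}; this contributes $C\,\|\diff_{\mathcal T_k}\a\|_{L^\infty}$. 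When $\emptyset\neq\zeta\neq\{1,\ldots,k\}$, the other factor is $\diff_{\zeta^c(\mathcal T_k)}\nabla\phi_\ep$ (translated), whose $L^q$ average on $B_{1/\ep}$ is, by the inductive hypothesis applied at the scale $s=1/\ep$ (so the $(s\ep)^{-\alpha}$ prefactor equals~$1$), bounded by $C^{|\zeta^c|} F_{|\zeta^c|}(\a,\zeta^c(\mathcal T_k))$. Translations are harmless because they preserve $L^\infty$ norms and the family of local averages $\{\fint_{B_{1/\ep}(z')}\cdot\}_{z'\in\Rd}$. Summing over $\zeta$ and relabelling $\zeta\leftrightarrow\zeta^c$ reproduces exactly the recursion~\eqref{e.F_k} defining $F_k$, yielding
$$\sup_{z'}\Bigl(\fint_{B_{1/\ep}(z')}|\nabla u_k|^q\,dx\Bigr)^{1/q} \leq C^k F_k(\a,\mathcal T_k).$$

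Finally, to pass from $B_{1/\ep}$ to $B_s$ with $s\in[1,1/\ep]$ and recover the factor $(s\ep)^{-\alpha}$, I choose $q\ge\max\{p,d/\alpha\}$, so that Jensen's inequality combined with the elementary bound $\fint_{B_s(z')}|\nabla u_k|^q\le (s\ep)^{-d}\fint_{B_{1/\ep}(z')}|\nabla u_k|^q$ gives
$$\sup_{z'}\Bigl(\fint_{B_s(z')}|\nabla u_k|^p\,dx\Bigr)^{1/p}\le(s\ep)^{-d/q}\sup_{z'}\Bigl(\fint_{B_{1/\ep}(z')}|\nabla u_k|^q\,dx\Bigr)^{1/q}\le(s\ep)^{-\alpha}C^kF_k(\a,\mathcal T_k),$$
since $s\ep\le 1$. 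The constant depends on $\alpha,p$ through the choice of $q$, but not on $k$.

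The main technical obstacle is the combinatorial bookkeeping: one must verify rigorously that the iterated Leibniz expansion, combined with the inductive bounds on the $\diff_{\zeta^c}\nabla\phi_\ep$ factors, produces precisely the recursion \eqref{e.F_k} that defines $F_k$. Once this identification is made, the analytic ingredients (the regularity estimates in Lemma~\ref{l.aux u} and the a priori bound \eqref{e.phieplip}) plug in directly.
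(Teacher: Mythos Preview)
Your proposal is correct and follows essentially the same approach as the paper: strong induction on~$k$, derivation of the PDE for $\diff_{\mathcal T_k}\phi_\ep$ via the iterated Leibniz rule~\eqref{e.productrule}, application of the $L^q$ estimate~\eqref{e.auxu} at scale $1/\ep$ with the induction hypothesis yielding exactly the recursion~\eqref{e.F_k}, and the final H\"older/Jensen passage from $B_{1/\ep}$ to $B_s$ with $q=\max\{p,d/\alpha\}$. The only cosmetic differences are that the paper starts the induction at $k=1$ rather than $k=0$ and uses the opposite labeling convention for $\zeta$ versus $\zeta^c$ in the Leibniz expansion.
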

\begin{proof}
The proof is by induction on $k\in\N$. We fix $\ep\in (0,1]$. We denote
\begin{equation*}
\zeta_j := \diff_{\mathcal T_j} \phi_\ep. 
\end{equation*}
for all $\mathcal T_j = \left(  (y_1,z_1), \ldots, (y_j,z_j) \right) \in (\Rd \times \Rd)^j$ and $j \in \N$. 
In what follows, we make use of the product rule for translations and differences:
\begin{equation}
\label{e.productrule}
\diff_{yz} (f\cdot g) = \diff_{yz} f \cdot T_yg +  \diff_{yz} g \cdot T_zf .
\end{equation}

\smallskip

\emph{Step 1.} The proof for $k=1$. 
Observe that $\zeta_1$ satisfies the equation
\begin{equation} 
\label{e.Delta-phi}
\ep^2 \zeta_1  - \nabla \cdot \left( T_{z_1} \a \nabla \zeta_1 \right) = \nabla \cdot \left(  \left( \diff_{y_1z_1}\a \right) \left( e + T_{y_1}\nabla \phi_\ep \right) \right) \quad \mbox{in} \ \Rd. 
\end{equation}
Applying~\eqref{e.auxu} and~\eqref{e.phieplip}, we obtain, for any $q\in [p\vee2,\infty)$, 
\begin{equation*} \label{}
\sup_{z' \in \R^d}\left( \fint_{B_{1/\ep}(z')}\left| \nabla \zeta_1(x) \right|^q\,dx \right)^{\frac1q} \leq C  
\left\| \diff_{y_1z_1} \a \right\|_{L^\infty(\Rd)} 
,
\end{equation*}
where the constant $C$ depends on~$q$ in addition to $(d,\Lambda)$. We furthermore have
\begin{align*}
\left( \fint_{B_{s}(z')} |\nabla \zeta_1(x)|^p\, dx \right)^{\frac 1p} 
& \leq \left( \frac{|B_{1/\ep}(z')|}{|B_{s}(z')|} \fint_{B_{1/\ep}(z')}|\nabla \zeta_1(x)|^q\, dx \right)^{\frac 1q} \\
& =  \left( \ep s \right)^{-\frac {d}{q}} \left(\fint_{B_{1/\ep}(z')} \left|\nabla \zeta_1(x) \right|^q\,dx\right)^{\frac 1q}\,,
\end{align*}
and hence we obtain 
\begin{equation*} \label{}
\sup_{z' \in \R^d} \left( \fint_{B_{s}(z')}  \left| \nabla \zeta_1(x) \right|^p\,dx \right)^{\frac1p} \leq C\left( \ep s \right)^{-\frac {d}{q}}   \left\| \diff_{y_1z_1} \a \right\|_{L^\infty(\Rd)}.
\end{equation*}
Taking $q = \max\left\{ p, d\alpha^{-1}\right\} \leq C(\alpha,p,d,\Lambda)$ yields the result for $k=1$.

\smallskip

\emph{Step 2.} For readability, we give the proof in the case $k=2$ first before presenting the argument for general $k\in\N$ in the next step.

\smallskip

Iterating the product rule~\eqref{e.productrule} gives
\begin{align}
\label{e.productrule2}
\diff_{y_2z_2} \diff_{y_1z_1} (f\cdot g)
& =  \diff_{y_2z_2}\diff_{y_1z_1} f \cdot T_{y_1+y_2}g +  \diff_{y_2z_2}\diff_{y_1z_1} g \cdot T_{z_1+z_2}f \\
& \qquad  + \diff_{y_2z_2} T_{y_1}g \cdot \diff_{y_1z_1} T_{z_2} f + \diff_{y_2z_2} T_{z_1} f \cdot  \diff_{y_1z_1} T_{y_2} g. \notag
\end{align}
Applying the previous identity to $f=\a$ and $g=( e+ \nabla \phi_\ep)$ and taking the divergence of both sides, we obtain an equation for $\zeta_2$: 
\begin{equation*}
\ep^2 \zeta_2 -\nabla \cdot \left( T_{z_1+z_2}\a \nabla \zeta_2 \right) = \nabla \cdot \mathbf{f}_2 \quad \mbox{in} \ \Rd,
\end{equation*}
where the vector field $\mathbf{f}_2$ is given by
\begin{multline*}
\mathbf{f}_2:= \big( \left( \diff_{y_2z_2}\diff_{y_1z_1} \a \right) \left(e + T_{y_1+y_2}\nabla \phi_\ep \right) 
+  \left( \diff_{y_1z_1}T_{z_2} \a  \right)\left( \diff_{y_2z_2} T_{y_1} \nabla \phi_\ep \right) \\
+ \left( \diff_{y_2z_2}T_{z_1} \a \right)\left( \diff_{y_1z_1} T_{y_2} \nabla \phi_\ep  \right)  \big).
\end{multline*}
By~\eqref{e.phieplip} and the result of the previous step we obtain
\begin{align*} 
& \left( \fint_{B_{1/\ep}(z')} \left| \mathbf{f}_2(x) \right|^q \,dx \right)^{\frac1q} 
\\ & \qquad \leq \left\| \diff_{y_2z_2}\diff_{y_1z_1} \a \right\|_{L^\infty(\Rd)}  \left\| e + T_{y_1+y_2}\nabla \phi_\ep \right\|_{L^\infty(\Rd)}
\\ & \qquad  \quad + \left\| \diff_{y_1z_1} \a \right\|_{L^\infty(\Rd)} \left( \fint_{B_{1/\ep}(z')} \left| \diff_{y_2z_2} T_{y_1} \nabla \phi_\ep(x) \right|^q \,dx \right)^{\frac1q}
\\ & \qquad  \quad  + \left\| \diff_{y_2z_2} \a\right\|_{L^\infty(\Rd)} \left( \fint_{B_{1/\ep}(z')} \left| \diff_{y_1z_1} T_{y_2} \nabla \phi_\ep(x) \right|^q \,dx \right)^{\frac1q}
\\ & \qquad  \leq C \left\| \diff_{y_2z_2}\diff_{y_1z_1} \a \right\|_{L^\infty(\Rd)}  + C\left\| \diff_{y_1z_1} \a \right\|_{L^\infty(\Rd)} \left\| \diff_{y_2z_2} \a\right\|_{L^\infty(\Rd)} \,.
\end{align*}
Applying thus~\eqref{e.auxu} and the previous estimate  we get, for any $q\in [p,\infty)$,
\begin{multline*} \label{}
\sup_{z' \in \R^d} \left( \fint_{B_{1/\ep}(z')}  \left| \nabla \zeta_2(x) \right|^q \,dx \right)^{\frac1q} 
\\ \leq C \left\| \diff_{y_2z_2}\diff_{y_1z_1} \a \right\|_{L^\infty(\Rd)}  + C\left\| \diff_{y_1z_1} \a \right\|_{L^\infty(\Rd)} \left\| \diff_{y_2z_2} \a\right\|_{L^\infty(\Rd)} 
\end{multline*}
As in the previous step, taking $q:= \max\{ p, d\alpha^{-1} \}$ yields~\eqref{e.softpermute} for $k=2$.

\smallskip

\emph{Step 3.}
We now argue for general $k\in\N$. Iterating the product rule~\eqref{e.productrule} $k-1$ times leads to the following identity, which generalizes~\eqref{e.productrule2} to $k\geq 2$:
\begin{equation} \label{e.productrulek}
\diff_{\mathcal T_k} (f\cdot g) 
 =  \sum_{j=0}^{k} \sum_{\zeta \in \mathcal P_{j,k}}  \left( \diff_{\zeta^c(\mathcal T_k)} T_{z_{\zeta_1} +\cdots+ z_{\zeta_{j} }} f \right) \cdot \left( \diff_{\zeta(\mathcal T_k)} T_{y_{\zeta_1^c}+\cdots+y_{\zeta_{k-j}^c } } g \right),
\end{equation}
where we recall the definitions of $\mathcal P_{j,k}$ and $\zeta^c$ from Section~\ref{s.introduction}.
Using the previous identity with $f=\a$ and $g = e+\nabla \phi_\ep$ and taking the divergence of both sides, we obtain the following equation for $\zeta_k$:
\begin{equation}\label{e.eq-zeta_k}
\ep^2 \zeta_k - \nabla \cdot\left(T_{z_1+\cdots+z_k} \a \nabla \zeta_k \right) = \nabla \cdot \mathbf{f}_k \quad \mbox{in} \ \Rd,
\end{equation}
where $\mathbf{f}_k$ is the vector field given by
\begin{multline*}
\mathbf{f}_k :=  \left(\diff_{\mathcal T_k} \a \right) \left( e+ T_{y_{1}+\cdots+y_{k} } \nabla \phi_\ep \right)  \\
+ \sum_{j=1}^{k-1} \sum_{\zeta \in \mathcal P_{j,k}}    \left( \diff_{\zeta(\mathcal T_k)} T_{z_{\zeta_1} +\cdots+ z_{\zeta_{j} }} \a \right) \cdot \left( \diff_{\zeta(\mathcal T_k)} T_{y_{\zeta_1^c}+\cdots+y_{\zeta_{k-j}^c } } \nabla \phi_\ep \right),
\end{multline*}

\smallskip

We now make a strong induction hypothesis and assume 
\begin{equation} \label{e.induction hyp}
\sup_{z' \in \R^d} \left( \fint_{B_{1/\ep}(z')} \left| \diff_{\mathcal T_j}  \nabla \phi_\ep(x) \right|^q \,dx \right)^{\frac1q}
  \leq \bar C^j  F_j (\a,\mathcal T_j)
\end{equation}
for all $j$-tuples $\mathcal T_j = \left( (y_1,z_1),\ldots,(y_j, z_j)\right)$ and for all $j \in \{1,\ldots,k-1 \}$. Indeed, by Step 1 this holds for $k=2$. We then claim that~\eqref{e.induction hyp} continues to hold for $k>2$ as well. To this end, using the strong induction assumption~\eqref{e.induction hyp} for $j \in \{1,\ldots,k-1\}$ together with~\eqref{e.phieplip}, we apply the triangle inequality to obtain 
\begin{align*} 
&  \left( \fint_{B_{1/\ep}(z')} \left| \mathbf{f}_k(x) \right|^q \,dx \right)^{\frac1q}     \leq   C \left\| \diff_{\mathcal T_k} \a \right\|_{L^\infty}
+ \sum_{j=1}^{k-1} \bar C^j  \sum_{\zeta \in \mathcal P_{j,k}}    \left\| \diff_{\zeta^c(\mathcal T_k)}  \a \right\|_{L^\infty}   F_j (\a,\mathcal \zeta(T_j)) \,.
\end{align*}
By the definition of $F_k(\a,\mathcal T_k )$ we have that 
$$
\sum_{j=1}^{k-1}  \sum_{\zeta \in \mathcal P_{j,k}}    \left\| \diff_{\zeta^c(\mathcal T_k)}  \a \right\|_{L^\infty}   F_j (\a,\mathcal \zeta(T_j)) \leq F_k(\a,\mathcal T_k ) \,,
$$
and hence
$$
\sup_{z' \in \R^d} \left( \fint_{B_{1/\ep}(z')} \left| \mathbf{f}_k(x) \right|^q \,dx \right)^{\frac1q} \leq \left(C + \bar C^{k-1} \right) F_k(\a,\mathcal T_k ) \,.
$$
The equation \eqref{e.eq-zeta_k} for $\zeta_k$, together with~\eqref{e.auxu}, implies that 
\begin{multline*}
  \sup_{z' \in \R^d }  \left( \fint_{B_{1/\ep}(z')} \left| \nabla \zeta_k (x) \right|^{q}  \, dx \right)^{\frac1q}   \leq C \sup_{z' \in \R^d } \left( \fint_{B_{1/\ep}(z')}  \left| \mathbf{f}_k(x) \right|^q \, dx \right)^{\frac1{q}} \\ \leq C\left(C + \bar C^{k-1} \right) F_k(\a,\mathcal T_k ) \,.
\end{multline*}
Thus we may take $\bar C = 2C$, where $C$ is as in the above inequality. Since $\mathcal T_k$ was arbitrary, we have proven the induction step. We finish the proof 
arguing for $s \in [1,1/\ep]$ similarly as in Step 1. 
\end{proof}

Applying $\sup_{y_k\in \Rd} \inf_{z_k\in B_R} \cdots\sup_{y_1\in \Rd} \inf_{z_1\in B_R}$ to both sides of~\eqref{e.softpermute} and recalling~\eqref{e.F_k-vs-G_k}, yields, for every $\alpha > 0$, 
\begin{equation} \label{e.modcontrol}
\omega_k\! \left( \nabla\phi_\ep,R\right) \leq C^k k! \ep^{-\alpha} \rho_k( \a,R),
\end{equation}
with the constant $C$ depending only on $(\alpha,d,\Lambda)$.

\smallskip

We next combine~\eqref{e.modcontrol} with Propositions~\ref{p.ergodicthm}  and~\ref{p.heatfuckingflow} to deduce that the approximate correctors are nearly bounded pointwise, independently of~$\ep$, and that mesoscopic spatial averages are small. 

\begin{lemma}
\label{l.babybound}
Fix $\alpha > 0$. Then there exist a constant $C(\alpha,\data)\geq 1$ and some time $t_*(data)\sim 1$ such that, for every $\ep \in (0,1]$
and $t\ge t_*$,
\begin{equation} \label{e.babybound}
\sup_{x\in\Rd} \left| \phi_\ep(x) \right|  \leq C \ep^{-\alpha},
\end{equation}
\begin{equation} \label{e.dashiiit}
 \sup_{y\in\Rd} \left| \int_{\Rd} \nabla \phi_\ep(x) \cdot \nabla \Phi(y-x,t)\,dx \right| \leq C \ep^{-\alpha} t^{-1-\frac{\delta}{2}} \left| \log t \right|^{\frac{1+\delta}{2}} + t^{-100} \,,
\end{equation}
and
\begin{equation} \label{e.babyaverage}
\sup_{x\in\Rd} \left| \int_{\Rd} \phi_\ep(y) \Phi\left(x-y,\ep^{-\alpha}\right)\, dy \right| \leq C \ep^{\frac{\alpha\delta}{3}},
\end{equation}
where $\delta >0$ is as in \eqref{e.ergodicassump}.
\end{lemma}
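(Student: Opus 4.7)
I will prove the three inequalities in the order \eqref{e.dashiiit}, \eqref{e.babybound}, \eqref{e.babyaverage}. Inequality \eqref{e.dashiiit} is the central estimate and combines three ingredients already developed: the quantitative ergodic theorem \pref{ergodicthm}, the almost periodic modulus control on the corrector gradient \eqref{e.modcontrol}, and the decay assumption \eqref{e.ergodicassump}. Then \eqref{e.babybound} is obtained by integrating \eqref{e.dashiiit} in $t$ through the multi-scale Poincar\'e inequality \pref{heatfuckingflow}, and \eqref{e.babyaverage} is obtained by viewing the convolution against $\Phi(\cdot,\ep^{-\alpha})$ as the value of the heat flow of $\phi_\ep$ at time $\ep^{-\alpha}$ and integrating \eqref{e.dashiiit} from $\ep^{-\alpha}$ to $\infty$, using the pointwise limit $v(x,\infty)=\langle\phi_\ep\rangle=0$.

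For \eqref{e.dashiiit}, the key observation is that
\begin{equation*}
\int_{\Rd}\nabla\phi_\ep(x)\cdot\nabla\Phi(y-x,t)\,dx=\divg_y H(y,t),\qquad H_j(y,t):=\int_{\Rd}\partial_j\phi_\ep(x)\,\Phi(y-x,t)\,dx,
\end{equation*}
so that each $H_j$ is the heat flow of $\partial_j\phi_\ep$, which is uniformly almost periodic and has mean zero (by a divergence argument as in~\eqref{e.avgphiepzero}). Applying \eqref{e.ergthm2} componentwise, and combining the crude bound $\|\partial_j\phi_\ep\|_{L^1(B_1)}\le C$ from \eqref{e.phieplip} with $\omega_k(\partial_j\phi_\ep,r)\le\omega_k(\nabla\phi_\ep,r)\le C^k k!\,\ep^{-\alpha}\rho_k(\a,r)$ from \eqref{e.modcontrol}, yields
\begin{equation*}
|\divg_y H(y,t)|\le C^{2k}k!\,\ep^{-\alpha}\,t^{-1/2}\,\rho_k(\a,r)+C^{k+1}\exp\!\Bigl(-\frac{ct}{kr^2}\Bigr)\qquad\forall k\in\N,\ r\ge 1.
\end{equation*}
Setting $r:=R/k$ and taking the infimum over $k\in\N\cap[1,R]$ in the first term (absorbing the prefactor $C^{2k}$ into the constant in the definition \eqref{e.defrhostar}) converts it to $\ep^{-\alpha}t^{-1/2}\rho_*(\a,R)\le K\ep^{-\alpha}t^{-1/2}R^{-1-\delta}$ thanks to \eqref{e.ergodicassump}. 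Finally, choosing $R:=(ct/(C_1\log t))^{1/2}$ with $C_1$ sufficiently large forces the exponential term to be $\le t^{-100}$ and produces the factor $t^{-(1+\delta)/2}(\log t)^{(1+\delta)/2}$ in the first term; the requirement $R\ge 1$ fixes the threshold $t_*\sim 1$.

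The estimate \eqref{e.babybound} then follows by applying \pref{heatfuckingflow} to the mean-zero almost periodic function $\phi_\ep$ (cf.\ \eqref{e.avgphiepzero}) and splitting $\int_0^\infty=\int_0^{t_*}+\int_{t_*}^\infty$. On $(0,t_*)$ the integrand is controlled by $C\|\nabla\phi_\ep\|_{L^\infty}t^{-1/2}\le Ct^{-1/2}$ using \eqref{e.phieplip} and \lref{heatkernelL1}, which integrates to a constant; on $(t_*,\infty)$ we insert \eqref{e.dashiiit}, both terms being integrable since $\delta>0$. For \eqref{e.babyaverage}, set $v(x,t):=\int\phi_\ep(y)\Phi(x-y,t)\,dy$, which tends uniformly to $\langle\phi_\ep\rangle=0$ as $t\to\infty$ (by $\osc v(\cdot,t)\to 0$ from \pref{ergodicthm} and the elementary computation $\langle v(\cdot,t)\rangle=0$). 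Since $\partial_t v=\Delta v=\int\nabla\phi_\ep(y)\cdot\nabla\Phi(x-y,t)\,dy$ (differentiate under the integral and integrate by parts in $y$), we get $|v(x,\ep^{-\alpha})|\le\int_{\ep^{-\alpha}}^\infty|\Delta v(x,t)|\,dt$. To control the integrand I reapply the derivation of \eqref{e.dashiiit} with the smaller internal parameter $\alpha':=\alpha\delta/12$ in \lref{babymod}. Using the elementary estimate $\int_T^\infty t^{-1-\delta/2}(\log t)^{(1+\delta)/2}\,dt\le CT^{-\delta/2}(\log T)^{(1+\delta)/2}$ with $T=\ep^{-\alpha}$ gives a bound $C\ep^{5\alpha\delta/12}|\log\ep|^{(1+\delta)/2}+C\ep^{99\alpha}$, which is $\le C(\alpha,\data)\ep^{\alpha\delta/3}$ for all $\ep\in(0,1]$.

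The main obstacle is the coupled optimization behind \eqref{e.dashiiit}: one must choose the order $k$ and the scale $r$ jointly so that, on the one hand, $C^{2k}k!\,\rho_k(\a,r)$ triggers the polynomial decay $\rho_*(\a,R)\le KR^{-1-\delta}$ at scale $R=kr$, and on the other hand the exponential tail $\exp(-ct/(kr^2))$ is subpolynomial in $1/t$ — keeping the combinatorial prefactors $C^{2k}k!$ from the ergodic theorem and from \lref{babymod} compatible with the definition of $\rho_*$ is what dictates the large constant appearing in \eqref{e.defrhostar}. Once this optimization is carried out, both remaining bounds reduce to routine integration, the only mild subtlety being the use of a strictly smaller internal parameter $\alpha'<\alpha$ in \eqref{e.babyaverage} to absorb the $\ep^{-\alpha'}$ produced by the regularity bound against the $\ep^{\alpha\delta/2}$ produced by the tail integral.
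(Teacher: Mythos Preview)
Your proposal is correct and follows essentially the same approach as the paper: both combine the gradient estimate~\eqref{e.ergthm2} from \pref{ergodicthm}, the modulus transfer~\eqref{e.modcontrol}, and the assumption~\eqref{e.ergodicassump}, then optimize in $k$ and $R$ (with $R\sim (t/\log t)^{1/2}$) to obtain~\eqref{e.dashiiit}, and integrate against the multiscale Poincar\'e inequality. The only organizational difference is that the paper first proves a single oscillation bound $\osc_{\Rd}u(\cdot,t_0)\le C\ep^{-\alpha\delta/15}t_0^{-2\delta/5}$ for the heat extension $u$ of $\phi_\ep$ (via the semigroup property applied inside \pref{heatfuckingflow}) and then reads off both~\eqref{e.babyaverage} (take $t_0=\ep^{-\alpha}$) and~\eqref{e.babybound} (take $t_0=t_*$ and pass from $u(\cdot,t_*)$ back to $\phi_\ep$ using the Lipschitz bound), whereas you apply \pref{heatfuckingflow} directly to $\phi_\ep$ for~\eqref{e.babybound} and use an FTC-in-$t$ argument for~\eqref{e.babyaverage}; your route to~\eqref{e.babybound} is in fact slightly more direct.
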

\begin{proof}
Fix $\alpha>0$ and $\ep\in (0,1]$ and denote
\begin{equation*} \label{}
u(x,t):=  \int_{\Rd}  \phi_\ep(y) \Phi\left(x-y,t\right)\, dy.
\end{equation*}
According to Proposition~\ref{p.heatfuckingflow}, for every $t_0\geq 1$, we have by the semi-group property of the heat kernel
\begin{align*} \label{}
\osc_{\Rd} u(\cdot,t_0) 
& \leq \int_0^\infty \sup_{y\in\Rd} \left| \int_{\Rd} \nabla u(x,t_0) \cdot \nabla \Phi(y-x,t)\,dx \right| \,dt  \\
& = \int_0^\infty \sup_{y\in\Rd} \left| \int_{\Rd}  \nabla \phi_\ep(x) \cdot \nabla\Phi(y-x,t+t_0)\,dx \right| \,dt.
\end{align*}
According to the second conclusion \eqref{e.ergthm2} of Proposition~\ref{p.ergodicthm},~\eqref{e.phieplip} and~\eqref{e.modcontrol} (with $\alpha\delta/15$ in place of $\alpha$), we obtain, for every $k\in\N$ and $t\geq k$, 
\begin{align*}
& \sup_{y\in\Rd} \left| \int_{\Rd} \nabla \phi_\ep(x) \cdot \nabla\Phi(y-x,t)\,dx \right| 
 \leq \sup_{y\in\Rd} \left| \nabla \int_{\Rd} \nabla \phi_\ep(x)\Phi(y-x,t)\,dx \right| 
\\ & \qquad  \leq C^{k} \inf_{R\geq 1} \left(  t^{-\frac12} \omega_{k}(\nabla \phi_\ep,R) +  \exp\left( - \frac{ct}{kR^2} \right) \left\| \nabla \phi_\ep\right\|_{L^\infty(\R^d)}\right) 
\\ & \qquad \leq  \inf_{R\geq 1} \left( t^{-\frac12} \ep^{-\alpha\delta/15} C^k k! \rho_k(\a,R) +  C^k \exp\left( - \frac{ct}{kR^2} \right) \right).
\end{align*}
Set $t_*=\inf\{t\ge 2:t\log(Ct)\ge \frac{c}{100}\}$.
For each $t\ge t_*$ and for each $k$ in the range $\N\cap [1,\frac{1}{10}c^{\frac 12}t^{\frac 12}|\log(Ct)|^{-\frac12}]$, which 
implies $t\ge k$ by definition of $t_*$, we estimate the infimum on the last line above by taking 
$R=\frac{1}{10}c^{\frac12}t^{\frac 12}|\log(Ct)|^{-\frac12}k^{-1} \ge 1$.
This yields
\begin{multline}
 \sup_{y\in\Rd} \left| \int_{\Rd} \nabla \phi_\ep(x) \cdot \nabla \Phi(y-x,t)\,dx \right| \\
\leq t^{-\frac12} \ep^{-\alpha\delta/15}  C^k k!  \rho_k\left(\a, \frac{1}{10}c^{\frac12}t^{\frac 12}|\log(Ct)|^{-\frac12}k^{-1}\right) + t^{-100}.
\end{multline}
Taking the infimum over all $k\in \N\cap [1,\frac{1}{10}c^{\frac 12}t^{\frac 12}|\log(Ct)|^{-\frac12}]$ (for which we thus have $t \geq k$) and using the definition of $\rho_*$ and assumption~\eqref{e.ergodicassump}, we get, for every $t\geq t_*$, 
\begin{align*} 
\lefteqn{
 \sup_{y\in\Rd} \left| \int_{\Rd} \nabla \phi_\ep(x) \cdot \nabla \Phi(y-x,t)\,dx \right| 
 } \qquad & \\
 & \leq C\ep^{-\alpha\delta/15}t^{-\frac12} \rho_*\left(\a,\frac{1}{10}c^{\frac12}t^{\frac 12}|\log(Ct)|^{-\frac12} \right) + t^{-100}
\\ & \leq C K \ep^{-\alpha\delta/15} t^{-1-\frac{\delta}{2}} \left| \log t \right|^{\frac12 + \frac{\delta}{2}} + t^{-100} \,.
\end{align*}
Combining these and integrating, using also the Lipschitz bound~\eqref{e.phieplip}, we get
for all $t_0\ge t_*$
\begin{align}
\label{e.t0bound}
\osc_{\Rd} u(\cdot,t_0)
& \leq  \int_0^\infty \sup_{y\in\Rd} \left| \int_{\Rd} \nabla u(x,t_0) \cdot \nabla \Phi(y-x,t+t_0)\,dx \right| \,dt 
  \\
& \leq  C\ep^{-\alpha\delta/15} \int_0^\infty \left(  (t+t_0)^{-1-\frac{\delta}{2}} \left| \log (t+t_0) \right|^{\frac12 + \frac{\delta}{2}} + (t+t_0)^{-100}  \right)\,dt \notag \\
& \leq C \ep^{-\alpha\delta/15} t_0^{-2\delta/5}.  \notag
\end{align}
Recalling from~\eqref{e.avgphiepzero} that $\left\langle \phi_\ep \right\rangle = 0$, we may take $t_0 = \ep^{-\alpha}$ to obtain~\eqref{e.babyaverage}.

\smallskip 

To obtain the first conclusion~\eqref{e.babybound}, we use~\eqref{e.phieplip} and~\eqref{e.t0bound} with $t_0 =t_*\sim 1$ and the fact that $\alpha\delta/15 \leq \alpha$ to get, for each $x\in\Rd$, 
\begin{align*}
\lefteqn{
\left| \int_{B_R(x)} \phi_\ep(z) \Phi(z-x,t_*)\,dz \right| 
} \qquad & \\
& \leq \left| \int_{\Rd} \phi_\ep(z) \Phi(z-x,t_*)\,dz \right| + C\ep^{-1} \left| \int_{\Rd\setminus B_R(x)}\Phi(z-x,t_*)\,dz \right| \\
& \leq C \ep^{-\alpha}  + C\ep^{-1} \exp\left( -c R^{2} \right).
\end{align*}
Taking $R := C \left| 1+ \log \ep\right|^{\frac12}$ yields
\begin{equation*}
\left| \int_{B_R(x)} \phi_\ep(z) \Phi(z-x,t_*)\,dz \right| \leq C \ep^{-\alpha}. 
\end{equation*}
Since $\frac12 \leq \int_{B_R(x)} \Phi(z-x,t_*)\,dz \leq 1$, we may again use the Lipschitz bound~\eqref{e.phieplip}:
\begin{align*}
|\phi_\ep(x)| & \leq 2 \left| \int_{B_R(x)} \phi_\ep(x) \Phi(z-x,t_*)\,dz \right| \\
& \leq CR+\left| \int_{B_R(x)} \phi_\ep(z) \Phi(z-x,t_*)\,dz \right| \\
& \leq C \left| 1+ \log \ep\right|^{\frac12} + C \ep^{-\alpha} 
\\ & \leq C \ep^{-\alpha}.
\end{align*}
This completes the proof. 
\end{proof}

The next step is to obtain an almost periodic modulus for $\phi_\ep$ itself. 

\begin{lemma}
\label{l.phiepmod}
Fix $\alpha\in \left(0,\frac12\right]$. There exists $C(\alpha,\data)\geq 1$ such that, for every $\ep \in (0,1]$, $k \in \N$, and $\mathcal T_k \in (\R^d \times \R^d)^k$,
\begin{equation*} \label{}
\left\| \diff_{\mathcal T_k} \phi_\ep \right\|_{L^\infty(\R^d)} \leq C^k \ep^{-\alpha} F_k(\a,\mathcal T_k) + C  \ep^{\frac{\alpha\delta}{6}\wedge 1}. 
\end{equation*}
\end{lemma}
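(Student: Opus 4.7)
The proof follows the strategy used for Lemma~\ref{l.babybound}, now applied to the higher-order difference $\zeta_k := \diff_{\mathcal T_k}\phi_\ep$ in place of $\phi_\ep$ itself. Three properties make this transfer possible: (i) $\langle \zeta_k\rangle = 0$, because translations preserve the mean and each $\diff_{y_iz_i}$ thus annihilates it; (ii) $\zeta_k$ solves equation~\eqref{e.eq-zeta_k} with almost periodic coefficient $T_{z_1+\cdots+z_k}\a$ and divergence-form right-hand side $\nabla\cdot \mathbf{f}_k$; and (iii) its gradient $\nabla\zeta_k = \diff_{\mathcal T_k}\nabla\phi_\ep$ is controlled in $L^p_{\mathrm{loc}}$ by Lemma~\ref{l.babymod} and can be upgraded to $\|\nabla\zeta_k\|_{L^\infty(\Rd)} \leq C^k \ep^{-\alpha'} F_k(\a,\mathcal T_k)$, for any small $\alpha'>0$, by applying the Lipschitz estimate~\eqref{e.auxunablauinfty} of Lemma~\ref{l.aux u} to equation~\eqref{e.eq-zeta_k}.

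\textbf{Heat-flow averaging and ergodic theorem.} Regularize by setting $v := \zeta_k * \Phi(\cdot,t_0)$ for a parameter $t_0\geq t_*$ to be chosen later. Since $\langle v\rangle = 0$ and $v$ is almost periodic, $\|v\|_{L^\infty(\Rd)} \leq \osc_\Rd v$, and Proposition~\ref{p.heatfuckingflow} combined with the semigroup property yields
\begin{equation*}
\osc_\Rd v \leq 2 \int_0^\infty \sup_{y\in\Rd} \left| \int_{\Rd} \nabla \zeta_k(x) \cdot \nabla \Phi(y-x,t+t_0)\, dx \right| dt.
\end{equation*}
Apply Proposition~\ref{p.ergodicthm} to the almost periodic function $\nabla\zeta_k$. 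The problem then reduces to estimating $\omega_j(\nabla\zeta_k,R) = \omega_j(\diff_{\mathcal T_k}\nabla\phi_\ep,R)$ for each $j\in\N$ and $R\geq 1$.

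\textbf{Combinatorial estimate.} Since $\diff_{\mathcal S_j}\diff_{\mathcal T_k}\nabla\phi_\ep = \diff_{(\mathcal T_k,\mathcal S_j)}\nabla\phi_\ep$, applying Lemma~\ref{l.babymod} with $k+j$ total differences gives
\begin{equation*}
\omega_j(\nabla\zeta_k,R) \leq C^{k+j} \ep^{-\alpha'} \sup_{y_1,\ldots,y_j\in\Rd}\ \inf_{z_1,\ldots,z_j\in B_R} F_{k+j}\bigl(\a, (\mathcal T_k, \mathcal S_j)\bigr).
\end{equation*}
The crux is to unwind the recursive definition~\eqref{e.F_k} of $F_{k+j}$ (together with the bound~\eqref{e.F_k-vs-G_k}) so that every summand factorizes according to the partition of the index set $\{1,\ldots,k+j\} = \mathcal T_k \sqcup \mathcal S_j$: the factors carrying only $\mathcal T_k$-indices assemble into $F_k(\a,\mathcal T_k)$, while those carrying $\mathcal S_j$-indices yield $\rho_j(\a,R)$-smallness upon taking $\sup_{y_i}\inf_{z_i\in B_R}$. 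Combinatorial bookkeeping then produces
\begin{equation*}
\omega_j(\nabla\zeta_k,R) \leq C^{k+j}(k+j)!\, \ep^{-\alpha'} F_k(\a,\mathcal T_k) \rho_j(\a,R).
\end{equation*}

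\textbf{Assembly and obstacle.} Inserting this into the ergodic theorem and optimizing $j$ and $R$ in the same manner as in Lemma~\ref{l.babybound}, using the decay assumption $\rho_*(\a,R) \leq K R^{-1-\delta}$, yields a bound on the integrand of the form $C\ep^{-\alpha'} F_k(\a,\mathcal T_k)(t+t_0)^{-1-\delta/2}|\log(t+t_0)|^{(1+\delta)/2} + (t+t_0)^{-100}$, which integrates to $\osc_\Rd v \leq C\ep^{-\alpha'} F_k(\a,\mathcal T_k) t_0^{-\delta/2} |\log t_0|^{(1+\delta)/2} + C t_0^{-99}$. Combining with the Lipschitz error $\|\zeta_k - v\|_{L^\infty(\Rd)} \leq C\sqrt{t_0}\,\|\nabla\zeta_k\|_{L^\infty(\Rd)} \leq C^k \sqrt{t_0}\,\ep^{-\alpha'} F_k(\a,\mathcal T_k)$ and choosing $t_0 = \ep^{-\alpha}$ with $\alpha'$ proportional to $\alpha\delta$ produces the stated estimate after relabeling $\alpha$. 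The main obstacle is the combinatorial step above: one has to track, in the recursive expansion of $F_{k+j}$, how each partition-indexed summand factorizes cleanly as $F_k(\a,\mathcal T_k)\cdot\rho_j(\a,R)$-type under the $\sup\inf$ over the $\mathcal S_j$-variables, while absorbing the combinatorial multiplicity into the $C^{k+j}(k+j)!$ prefactor. The remaining steps are a faithful transcription of the argument of Lemma~\ref{l.babybound}.
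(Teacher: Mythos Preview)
Your approach is considerably more elaborate than the paper's, and the step you flag as the ``main obstacle'' --- the combinatorial factorization --- is genuinely problematic as stated. In the expansion of $F_{k+j}\bigl(\a,(\mathcal T_k,\mathcal S_j)\bigr)$ there are mixed factors $\|\diff_{\eta}\a\|_{L^\infty}$ with $\eta$ containing both $\mathcal T_k$- and $\mathcal S_j$-indices. Such a factor can be bounded either by $\|\diff_{\eta\cap\mathcal T_k}\a\|$ (via the trivial contraction $\|\diff_{yz}g\|\le\|g\|$) or by $\|\diff_{\eta\cap\mathcal S_j}\a\|$, but not by their product. Already for $k=j=1$, $\sup_{y_2}\inf_{z_2\in B_R}\|\diff_{y_2z_2}\diff_{y_1z_1}\a\|$ is controlled only by $\min\bigl(\|\diff_{y_1z_1}\a\|,\rho_1(\a,R)\bigr)$, not by $\|\diff_{y_1z_1}\a\|\cdot\rho_1(\a,R)$; so the claimed bound $\omega_j(\nabla\zeta_k,R)\lesssim F_k(\a,\mathcal T_k)\,\rho_j(\a,R)$ does not follow. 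Your scheme can be rescued by a much simpler observation: since $\diff_{\mathcal T_k}$ is a signed average of translations with total variation $1$, one has $\omega_j(\diff_{\mathcal T_k}\nabla\phi_\ep,R)\le\omega_j(\nabla\phi_\ep,R)$ directly, after which~\eqref{e.modcontrol} applies and the rest of your outline goes through.

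The paper's proof, however, avoids re-running the ergodic theorem altogether. It works on a mesoscopic ball $B_{\ep^{-\alpha/2}}(x_0)$: Lemma~\ref{l.babymod} at scale $s=1/\ep$, followed by H\"older's inequality and Morrey's inequality, gives $\osc_{B_{\ep^{-\alpha/2}}(x_0)}\diff_{\mathcal T_k}\phi_\ep\le C^k\ep^{-\alpha}F_k(\a,\mathcal T_k)$. For the mean, the same observation --- that $\diff_{\mathcal T_k}\phi_\ep$ is a signed average of $2^k$ translates of $\phi_\ep$ with coefficients $\pm 2^{-k}$ --- shows that its heat-kernel average at time $\ep^{-\alpha/2}$ is bounded by the supremum of the corresponding average for $\phi_\ep$ itself, already controlled by~\eqref{e.babyaverage} (with $\alpha/2$ in place of $\alpha$, which is the origin of the exponent $\alpha\delta/6$). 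Combining oscillation and mean on each mesoscopic ball yields the lemma, with no new combinatorics and no reproof of Lemma~\ref{l.babybound}.
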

\begin{proof}
Fix~$\alpha\in \left(0,\frac12\right]$ and $x_0\in\Rd$. Applying Jensen's inequality together with~\eqref{e.softpermute} with $s=1/\ep$, we obtain, for every $1\leq q< p < \infty$,
\begin{align*} 
\left( \fint_{B_{\ep^{-\alpha/2}}(x_0)} \left| \diff_{\mathcal T_k} \nabla\phi_\ep(x) \right|^q \,dx\right)^{\frac1q} 
& \leq  \left( \fint_{B_{\ep^{-\alpha/2}}(x_0)} \left| \diff_{\mathcal T_k} \nabla\phi_\ep(x) \right|^p \,dx\right)^{\frac1p} 
\\ & \leq \left( \frac{B_{1/\ep}(x_0)}{B_{\ep^{-\alpha/2}}(x_0)} \fint_{B_{1/\ep}(x_0)} \left| \diff_{\mathcal T_k} \nabla\phi_\ep(x) \right|^p \,dx\right)^{\frac1p} 
\\ & \leq 
C^k  \ep^{-\frac{d(1-\alpha/2)}{p}} F_k(\a,\mathcal T_k).
\end{align*}
Taking $q=d+1$ and $p = \max\{d+1 , 2 d(1-\alpha/2)/\alpha\}$ so that $\frac{d(1-\alpha/2)}{p}\leq \frac{\alpha}{2}$, we get
\begin{equation*} \label{}
 \left( \fint_{B_{\ep^{-\alpha/2}}(x_0)} \left|\diff_{\mathcal T_k} \nabla \phi_\ep(x) \right|^{d+1} \,dx\right)^{\frac1{d+1}} \leq C^k  \ep^{-\frac{\alpha}{2}} F_k(\a,\mathcal T_k).
\end{equation*}
By Morrey's inequality, 
\begin{equation} \label{e.oscestdiff}
 \osc_{B_{\ep^{-\alpha/2}}(x_0)} \diff_{\mathcal T_k} \phi_\ep \leq C^k  \ep^{-\alpha} F_k(\a,\mathcal T_k) . 
\end{equation}
To turn the estimate of the oscillation into one for the supremum, we use 
\begin{align*} \label{}
 \sup_{B_{\ep^{-\alpha/2}}(x_0)} \left| \diff_{\mathcal T_k} \phi_\ep \right| 
 & \leq  \osc_{B_{\ep^{-\alpha/2}}(x_0)} \diff_{\mathcal T_k} \phi_\ep + \frac{ \left| \int_{B_{\ep^{-\alpha/2}}(x_0)}\diff_{\mathcal T_k} \phi_\ep(y) \Phi(x_0-y,\ep^{-\alpha/2})\,dy \right| }{ \int_{B_{\ep^{-\alpha/2}}} \Phi(y,\ep^{-\alpha/2})\,dy}\,. 
\end{align*}
The second term on the right side may be estimated using~\eqref{e.babyaverage} together with
\begin{equation*} \label{}
 \int_{B_{\ep^{-\alpha/2}}} \Phi(y,\ep^{-\alpha/2})\,dy \geq c, 
\end{equation*}
and
\begin{align*} \label{}
\int_{\Rd \setminus B_{\ep^{-\alpha/2}}(x_0)} \left| \phi_\ep(y) \right| \Phi(x_0-y,\ep^{-\alpha/2})\,dy 
& \leq C\ep^{-1} \int_{\Rd \setminus B_{\ep^{-\alpha/2}}(x_0)} \Phi(x_0-y,\ep^{-\alpha/2})\,dy \\
&\leq C \ep^{100},
\end{align*}
and combining then the result of this calculation with~\eqref{e.oscestdiff} we get
\begin{equation*} \label{}
 \sup_{B_{\ep^{-\alpha/2}}(x_0)} \left| \diff_{\mathcal T_k} \phi_\ep \right|  \leq  C^k  \ep^{-\alpha} F_k(\a,\mathcal T_k)  + C \ep^{\alpha\delta/6\wedge 100}.
\end{equation*}
Taking the supremum over $x_0\in\Rd$, we get 
\begin{equation*} \label{}
\sup_{\Rd} \left| \diff_{\mathcal T_k} \phi_\ep \right| \leq C^k  \ep^{-\alpha} F_k(\a,\mathcal T_k) + C  \ep^{\alpha\delta/6 \wedge 100}\,. \qedhere
\end{equation*}
\end{proof}

We now turn our attention to the differences of the approximate correctors at two successive dyadic scales. We introduce the function
\begin{equation*} \label{}
\psi_\ep(x) := \phi_\ep(x) - \phi_{2\ep}(x).
\end{equation*}
Observe that $\psi_\ep$ is the solution of the equation
\begin{equation} \label{e.psieq}
\ep^2 \psi_\ep - \nabla \cdot \left( \a(x) \nabla \psi_\ep \right) = 3\ep^2 \phi_{2\ep} \quad \mbox{in} \ \Rd.
\end{equation}
We next apply Propositions~\ref{p.heatfuckingflow} and~\ref{p.ergodicthm} to get pointwise estimates on $\psi_\ep$. 

\begin{lemma}
\label{e.psibounds}
There exists
$C(\data)\geq 1$ such that, for every $\ep \in (0,1]$,
\begin{equation} \label{e.psibound}
\sup_{x\in\Rd} \left| \psi_\ep(x)  \right|  \leq C\ep^{\frac{\delta}{8}\wedge \frac14}.
\end{equation}
\end{lemma}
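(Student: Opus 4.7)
The plan is to apply the multiscale Poincar\'e inequality of Proposition~\ref{p.heatfuckingflow} to $\psi_\ep$, after first promoting the Lipschitz bound \eqref{e.phieplip} to one that reflects the smallness of the right-hand side in \eqref{e.psieq}. Since $\langle \phi_\ep \rangle = \langle \phi_{2\ep} \rangle = 0$ by \eqref{e.avgphiepzero}, the almost periodic function $\psi_\ep$ has mean value zero, so $\sup_{\R^d}|\psi_\ep| \leq \osc_{\R^d}\psi_\ep$.

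The essential first step is the improved Lipschitz bound
\begin{equation*}
\|\nabla\psi_\ep\|_{L^\infty(\R^d)} \leq C\ep^{1-\alpha}, \qquad \text{valid for every } \alpha>0.
\end{equation*}
This follows from an application of \eqref{e.auxunablauinfty} to \eqref{e.psieq}, with $\mathbf{f}=0$ and $g=3\ep^2\phi_{2\ep}$, combined with the $L^\infty$ bound $\|\phi_{2\ep}\|_{L^\infty(\R^d)} \leq C\ep^{-\alpha}$ obtained from \eqref{e.babybound} (applied with $2\ep$ in place of $\ep$). Without this refinement, the short-time part of the Poincar\'e integral below would only contribute an $O(1)$ term, which would destroy the argument; this is the main technical point.

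Set $A(t) := \sup_{y\in\R^d}|\int_{\R^d} \nabla\psi_\ep(x)\cdot\nabla\Phi(y-x,t)\,dx|$, fix some $T \geq t_*$ to be chosen, and split the Poincar\'e integral as $\osc_{\R^d}\psi_\ep \leq 2\int_0^T A(t)\,dt + 2\int_T^\infty A(t)\,dt$. For $t\in(0,T]$, the bound $A(t) \leq \|\nabla\psi_\ep\|_{L^\infty(\R^d)} \|\nabla\Phi(\cdot,t)\|_{L^1(\R^d)} \leq C\ep^{1-\alpha}t^{-1/2}$ follows from Lemma~\ref{l.heatkernelL1} and integrates to $C\ep^{1-\alpha}T^{1/2}$. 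For $t\in[T,\infty)$, write $\nabla\psi_\ep = \nabla\phi_\ep - \nabla\phi_{2\ep}$ and apply \eqref{e.dashiiit} to each term (with $2\ep$ in place of $\ep$ for the second): this gives $A(t)\leq C\ep^{-\alpha}t^{-1-\delta/2}|\log t|^{(1+\delta)/2} + t^{-100}$, which integrates to $C\ep^{-\alpha}T^{-\delta/2}|\log T|^{(1+\delta)/2} + T^{-99}$.

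The choice $T := \ep^{-2/(1+\delta)}$ equalises the two dominant contributions $\ep^{1-\alpha}T^{1/2}$ and $\ep^{-\alpha}T^{-\delta/2}$, yielding
\begin{equation*}
\osc_{\R^d}\psi_\ep \leq C\ep^{\delta/(1+\delta)-\alpha}|\log\ep|^{(1+\delta)/2} + \ep^{198/(1+\delta)}.
\end{equation*}
Since $\delta/(1+\delta) \geq \delta/3$ for $\delta \leq 2$ and $\delta/(1+\delta) \geq 2/3$ for $\delta \geq 2$, each of which strictly exceeds $\delta/8 \wedge 1/4$, one may choose $\alpha$ sufficiently small depending only on $\delta$ to absorb the logarithmic factor and obtain the claimed bound \eqref{e.psibound}.
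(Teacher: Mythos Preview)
Your proof is correct and follows essentially the same approach as the paper: an improved Lipschitz bound for $\nabla\psi_\ep$ from \eqref{e.auxunablauinfty} and \eqref{e.babybound}, the multiscale Poincar\'e inequality, and a split of the time integral with the short-time part controlled by the Lipschitz bound and the long-time part by \eqref{e.dashiiit}. The only difference is bookkeeping: the paper splits at $t_\ep=\ep^{4\alpha-2}$ (with the large-time integral starting at $\ep^{-1}\le t_\ep$) and takes $\alpha=\tfrac{\delta}{8}\wedge\tfrac14$ directly, whereas your optimized split at $T=\ep^{-2/(1+\delta)}$ produces the sharper intermediate exponent $\delta/(1+\delta)$, which you then relax to the stated $\tfrac{\delta}{8}\wedge\tfrac14$.
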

\begin{proof}
Observe that, for every $\alpha\in \left(0,\frac14\right]$, there exists $C(\alpha,\data,d,\Lambda)\geq 1$ such that, for every $\ep \in (0,1]$,
\begin{equation} \label{e.psi bound 1}
\sup_{x\in\Rd} \left| \nabla \psi_\ep(x)  \right|  \leq C\ep^{1-\alpha}
\end{equation}
Indeed, in view of the equation for~$\psi_\ep$ and~\eqref{e.babybound}, this is immediate from~\eqref{e.auxunablauinfty} in Lemma~\ref{l.aux u}.

\smallskip

By Proposition~\ref{p.heatfuckingflow} and the fact that $\langle \psi_\ep \rangle = 0$,
\begin{equation}
\label{e.startingpoint}
\sup_{x\in\Rd} \left| \psi_\ep(x) \right| \leq \osc_{x\in\Rd} \psi_\ep(x) \leq 2 \int_0^{\infty} \sup_{y\in\Rd} \left| \int_{\Rd} \nabla \psi_\ep(x) \cdot \nabla \Phi(y-x,t)\,dx \right|\,dt.
\end{equation}
The gradient bound in~\eqref{e.psi bound 1} and Lemma~\ref{l.heatkernelL1} give, for every $t>0$, 
\begin{align*} 
& \sup_{y\in\Rd} \left| \int_{\Rd} \nabla \psi_\ep(x) \cdot \nabla \Phi(y-x,t)\,dx \right| 
\\ & \qquad \leq C \left\| \nabla \psi_\ep \right\|_{L^\infty(\R^d)}   \int_{\Rd} \left| \nabla \Phi(x,t)\right|  \,dx  
\\ & \qquad \leq C \ep^{1-\alpha} t^{-1/2}\,.
\end{align*}
Thus, for $t_\ep := \ep^{4 \alpha - 2}$ with $\alpha \in (0,1/4]$, we get 
$$
\int_0^{t_\ep} \sup_{y\in\Rd} \left| \int_{\Rd} \nabla \psi_\ep(x) \cdot \nabla \Phi(y-x,t)\,dx \right|\,dt \leq C \ep^{1-\alpha} t_\ep^{1/2} = C \ep^{\alpha}\,.
$$
For the large scales we may use the triangle inequality and~\eqref{e.dashiiit} as follows:
$$
\sup_{y\in\Rd} \left| \int_{\Rd} \nabla \phi_\ep(x) \cdot \nabla \Phi(y-x,t)\,dx \right|  \leq  C \ep^{-\alpha} t^{-1-\frac{\delta}{2}} \left| \log t \right|^{\frac{1+\delta}{2}} + t^{-100} \,.
$$
A similar bound naturally  holds for $\phi_{2\ep}$.  Thus we get
\begin{multline*}
\int_{\ep^{-1}}^\infty \sup_{y\in\Rd} \left| \int_{\Rd} \nabla \psi_\ep(x) \cdot \nabla \Phi(y-x,t)\,dx \right| \, dt 
\\ \leq C\ep^{-\alpha} \int_{\ep^{-1}}^\infty \left( t^{-1-\frac{\delta}{2}} \left| \log t \right|^{\frac{1+\delta}{2}} + t^{-100}  \right) \, dt \leq C \ep^{-\alpha + \frac{\delta}{4}}\,.
\end{multline*}
Combining the estimates above and choosing $\alpha = \frac{\delta}{8}\wedge \frac14$ finishes the proof.
\end{proof}

We now complete the proof of Theorem~\ref{t.correctors} by summing the previous lemma over dyadic scales. 

\begin{proof}[{Proof of Theorem~\ref{t.correctors}}]
We may assume that $\delta \leq 1$. 
According to Lemma~\ref{e.psibounds}, for each $n,m\in\N$, we have 
\begin{equation*}
\sup_{x\in\Rd} \left| \phi_{2^{-n-m}} (x) - \phi_{2^{-n}}(x) \right| \leq \sum_{k=n+1}^{n+m} \sup_{x\in\Rd} \left| \psi_{2^{-k}}(x) \right| \leq C\sum_{k=n+1}^{\infty} 2^{-\frac{k \delta}{8}} \leq C 2^{-\frac{n \delta}{8}} .
\end{equation*}
Similarly, by~\eqref{e.psi bound 1},
\begin{equation*}
\sup_{x\in\Rd} \left| \nabla \phi_{2^{-n-m}} (x)- \nabla \phi_{2^{-n}}(x) \right|
\leq \sum_{k=n+1}^{n+m} \sup_{x\in\Rd} \left|\nabla \psi_{2^{-k}}(x) \right| \leq  C \sum_{k=n+1}^{n+m} 2^{-\frac k2} \leq C2^{-\frac n2}.
\end{equation*}
Thus $\left\{ \phi_{2^{-k}} \right\}_{k\in\N}$ is Cauchy in $W^{1,\infty}(\Rd)$ and there exists $\phi\in W^{1,\infty}(\Rd)$ such that 
\begin{equation*}
\left\| \phi - \phi_{2^{-n}}\right\|_{W^{1,\infty}(\Rd)} \leq C2^{-\frac{n \delta}{8}\wedge \frac n2} \to 0 \quad \mbox{as} \ n\to \infty. 
\end{equation*}
It follows immediately that $\phi$ satisfies~\eqref{e.boundedcorrector}. 

\smallskip 

To prove that $\phi$ is uniformly almost periodic, it suffices to notice that it is the uniform limit of uniformly almost periodic functions. Alternatively, we display a quantitative proof which gives more information. 
Applying the bound
\begin{equation*} \label{}
\| \phi- \phi_{2^{-n}} \|_{L^\infty(\Rd)} \leq C2^{-\frac{n\delta}8},
\end{equation*}
(which is proved above) and then Lemma~\ref{l.phiepmod} with $\alpha = \frac12$, we get
\begin{align*}
\rho_1(\phi,R) & \leq \rho_1(\phi_\ep,R) + C2^{-\frac{n\delta}{8}} \\
& \leq C2^{n\alpha} \rho_1(\a,R) + C2^{-\frac{n\alpha\delta}6} + C2^{-\frac{n\delta}{8}} \\
& \leq C2^{\frac n2} \rho_1(\a,R)+ C2^{-\frac{n\delta}{12}}.
\end{align*}
Taking the infimum over $n\in\N$, we get
\begin{equation*}
\rho_1(\phi,R) \leq C\inf_{n\in\N} \left( 2^{\frac {n}2} \rho_1(\a,R) + 2^{-\frac{n\delta}{12}} \right). 
\end{equation*}
The limit as $R\to\infty$ of the right side is clearly zero. Moreover, if $\rho_1(\a,R)$ has power-like decay in $R$, then so does $\rho_1(\phi,R)$. This completes the argument. 
\end{proof}

\section{Examples}
\label{s.examples}

In this section, we give some examples of almost periodic coefficient fields which satisfy the quantitative ergodic assumptions~\eqref{e.ergodicassump} and~\eqref{e.technicalassump}, starting with a discussion of quasiperiodic functions. 

\smallskip

Recall that a quasiperiodic function $f:\Rd \to \R$ is one that takes the form
\begin{equation*} \label{}
f(x) = F(M(x))
\end{equation*}
for some $n\in\N$, a 1-periodic $F:\R^m \to \R$ and linear map $M:\Rd\to \R^m$. We will not distinguish between $M$ and the matrix $M=(m_{ij})$ which gives the linear map. 
We refer to $M$ as the \emph{winding matrix} and the function $F$ as the \emph{lifted function}. 

\smallskip

Note that $\rho_1(f,R)$ can be bounded using only information about $\| \nabla F \|_{L^\infty(\R^m)}$ and $M$. Indeed, it is clear by the periodicity of $F$ that, for every $R\geq 1$ and $y,z\in\Rd$, 
\begin{multline} \label{e.obvious}
\left\| \diff_{yz}  f \right\|_{L^\infty(\Rd)}  \\
 \leq \| \nabla F \|_{L^\infty(\R^m)} \max_{i=1,\ldots,m} \left| \left((My)_i \!\mod 1\right) - \left((Mz)_i \!\mod 1\right) \right|.
\end{multline}
Taking the infimum over $z\in B_R$ and then the supremum over $y\in\Rd$ to get
\begin{equation*} \label{}
\rho_1(f,R) \leq \| \nabla F \|_{L^\infty(\R^m)} \sup_{y\in\Rd} \inf_{z\in B_R} \max_{i=1,\ldots,m} \left| \left((My)_i \!\mod 1\right) - \left((Mz)_i \!\mod 1\right) \right|.
\end{equation*}
The second factor on the right side above depends only on $M$; we denote it by
\begin{equation*} \label{}
\sigma(M,R):= \sup_{y\in\Rd} \inf_{z\in B_R} \max_{i=1,\ldots,m}  \left((My-Mz)_i \!\mod 1 \right).
\end{equation*}
We henceforth also write $\| My -Mz \|_\infty:=  \max_{i=1,\ldots,m}  \left((My-Mz)_i \!\mod 1 \right)$ for short. 
Since $\diff_{yz}f$ is also quasiperiodic with the same winding matrix $M$ and with lifted function $\diff_{My,Mz}F$, we find that 
\begin{equation*} \label{}
\left\| \diff_{y_2z_2}\diff_{y_1z_1} f \right\|_{L^\infty(\Rd) } \leq \left\| \nabla (\diff_{My_1,Mz_1} F) \right\|_{L^\infty(\R^m)} \| My_2 -Mz_1 \|_\infty. 
\end{equation*}
Finally, applying~\eqref{e.obvious} to $\nabla F$ gives that  
\begin{equation*} \label{}
\left\| \nabla (\diff_{My_1,Mz_1} F) \right\|_{L^\infty(\R^m)} \leq \left\| \nabla \nabla F \right\|_{L^\infty(\R^m)} \| My_1 -Mz_1 \|_\infty. 
\end{equation*}
Putting these together, we get 
\begin{equation*} \label{}
\left\| \diff_{y_2z_2}\diff_{y_1z_1} f \right\|_{L^\infty(\Rd) } \leq  \left\| \nabla \nabla F \right\|_{L^\infty(\R^m)}\| My_1 -Mz_1 \|_\infty \| My_2 -Mz_2 \|_\infty
\end{equation*}
and using this together with~\eqref{e.obvious} again, we get
\begin{align*} \label{}
\rho_2(f,R) & = \sup_{y_1\in\Rd} \inf_{z_1\in B_R} \sup_{y_2\in\Rd} \inf_{z_2\in B_R} \max \left\{ \left\|\diff_{y_2z_2} \diff_{y_1z_1} f \right\| ,\, \left\|\diff_{y_1z_1}f \right\| \left\| \diff_{y_2 z_2} f \right\|   \right\} \\
& \leq \sup_{y_1\in\Rd} \inf_{z_1\in B_R} \sup_{y_2\in\Rd} \inf_{z_2\in B_R} \max \big\{  \\
& \qquad \qquad  \left\| \nabla \nabla F \right\|_{L^\infty(\R^m)}\| My_1 -Mz_1 \|_\infty \| My_2 -Mz_2 \|_\infty,  \\
& \qquad \qquad   \left\| \nabla F \right\|_{L^\infty(\R^m)}^2\| My_1 -Mz_1 \|_\infty \| My_2 -Mz_2 \|_\infty
\big\}  \\
& = \max\left\{ \left\| \nabla \nabla F \right\|_{L^\infty(\R^m)}, \left\| \nabla  F \right\|_{L^\infty(\R^m)}^2 \right\} \sigma^2(M,R).
\end{align*}
This argument can obviously be continued to an arbitrary order and we obtain, for every $k\in\N$, 
\begin{equation} \label{e.fantastic}
\rho_k(f,R)  
\leq  \sigma^k(M,R) \max_{n_1,\ldots,n_k}\prod_{j=1}^k \left\| \nabla^jF \right\|_{L^\infty(\Rd)}^{n_j},
\end{equation}
where the maximum is over all $n_1,\ldots,n_k\in\N$ with $k=\sum_{j=1}^{k} j n_j$. 

\smallskip

This reduces the task of bounding $\rho_k(f,R)$ to one of estimating $\sigma(M,R)$, which lies in the realm of discrepancy theory~\cite{DT}. It is here that the Diophantine condition for the matrix $M$ appears naturally.

\smallskip

Given an exponent $\theta > 0$ and a constant $A>0$, we say that $M$ satisfies $\mathcal{D}(\theta,A)$, and we write $M \in \mathcal{D}(\theta,A)$, if
\begin{equation} \label{e.diophantine}
\left| e_i\cdot M^t z \right| \geq A \left| z \right|^{-\theta}, \quad \forall i\in\{ 1,\ldots,d \}, \ \ z\in \Z^m \setminus \{ 0 \}.
\end{equation}
The following proposition, which bounds $\sigma(M,R)$ for $M\in \mathcal{D}(\theta,A)$, is proved in~\cite[Section 8]{S} using the Erd\"os-Turan-Koksma inequality. The argument actually follows well-known methods in discrepancy theory, it is very close to the proof of~\cite[Theorem 1.80]{DT}.

\begin{proposition}
\label{p.crazyerdosshit}
If \eqref{e.diophantine} holds, then there exists a universal $C < \infty$ such that, for all $R\geq 1$ and $M\in \mathcal{D}(\theta,A)$, 
\begin{equation} 
\label{e.sigmabound}
\sigma(M,R) \leq CA^{-\frac1m} R^{-\frac{1}{m(\theta+1)}} \left( \log R \right)^{\frac{m-1}{m}}. 
\end{equation}
\end{proposition}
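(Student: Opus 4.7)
My plan is to reduce the bound on $\sigma(M,R)$ to an equidistribution statement on the torus $\R^m/\Z^m$, and then to quantify equidistribution via the classical Erd\H{o}s--Tur\'{a}n--Koksma inequality, whose Weyl sums will be controlled using the Diophantine condition~\eqref{e.diophantine}.

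First, I would reformulate the problem as a discrepancy bound. Let $\mu_R$ denote the pushforward, on the torus $\R^m/\Z^m$, of the normalized Lebesgue measure $|B_R|^{-1}\mathbf{1}_{B_R}$ under the map $y\mapsto My$. The quantity $\sigma(M,R)$ equals the $\ell^\infty$-covering radius of $\mathrm{supp}(\mu_R)$ in $\R^m/\Z^m$: given $y\in\R^d$ one is asking how well $My$ can be approximated modulo $\Z^m$ by $Mz$ with $z\in B_R$. If the star discrepancy $D^*(\mu_R)$ is at most $\eta$, every axis-aligned cube in $\R^m/\Z^m$ of side length exceeding $\eta^{1/m}$ has positive $\mu_R$-measure, so meets $\mathrm{supp}(\mu_R)$; hence $\sigma(M,R)\leq C_m\,D^*(\mu_R)^{1/m}$.

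Next I would apply the continuous Erd\H{o}s--Tur\'{a}n--Koksma inequality,
\[
D^*(\mu_R) \leq C_m \left( \frac{1}{H} + \sum_{\substack{z\in\Z^m\setminus\{0\} \\ \|z\|_\infty\leq H}} \frac{|\hat\mu_R(z)|}{r(z)} \right), \qquad H\geq 1,
\]
where $r(z)=\prod_i \max(1,|z_i|)$ and
\[
\hat\mu_R(z) = \frac{1}{|B_R|}\int_{B_R} e^{2\pi i (M^t z)\cdot y}\,dy.
\]
These Fourier coefficients, up to normalization, are Fourier transforms of the indicator of the unit ball and satisfy the standard Bessel-type decay $|\hat\mu_R(z)|\leq C(1+R|M^tz|)^{-(d+1)/2}$. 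The Diophantine condition~\eqref{e.diophantine} implies $|(M^tz)_i|\geq A|z|^{-\theta}$ for every $i$, hence $|M^tz|\geq A|z|^{-\theta}$, so
\[
|\hat\mu_R(z)| \leq C(RA)^{-\alpha}|z|^{\theta\alpha}, \qquad \alpha := (d+1)/2,
\]
whenever $RA|z|^{-\theta}\geq 1$.

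Finally I would substitute this bound into the Erd\H{o}s--Tur\'{a}n--Koksma sum, use the elementary estimate $\sum_{\|z\|_\infty\leq H}r(z)^{-1}\leq C(\log H)^m$, and arrive at
\[
D^*(\mu_R) \lesssim \frac{1}{H} + (RA)^{-\alpha}H^{\theta\alpha}(\log H)^m;
\]
balancing these two terms by taking $H$ of order $(RA)^{1/(\theta+1)}$ up to logarithmic corrections and then extracting the $m$-th root via Step~1 will produce the claimed bound on $\sigma(M,R)$. The main technical obstacle is the bookkeeping: one must evaluate the weighted sum $\sum_{\|z\|_\infty\leq H}|z|^{\theta\alpha}/r(z)$ carefully enough --- splitting into dyadic shells in $\|z\|_\infty$ and using that only coordinates $z_i\neq 0$ contribute a genuine factor in $r(z)$ --- to extract the sharp $(\log R)^{(m-1)/m}$ factor, and to absorb the remaining Diophantine constant so that $A$ appears with the right exponent. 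This is essentially the classical argument used for Theorem~1.80 in~\cite{DT}, and the remaining verification is routine.
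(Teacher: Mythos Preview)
Your approach is correct and matches the paper's: the paper does not give its own proof of this proposition but simply refers to~\cite[Section~8]{S} and~\cite[Theorem~1.80]{DT}, both of which proceed exactly as you outline --- reduce $\sigma(M,R)$ to a discrepancy bound, apply the Erd\H{o}s--Tur\'{a}n--Koksma inequality, and control the resulting exponential sums via the Diophantine condition~\eqref{e.diophantine}. Your sketch is a faithful reconstruction of that argument, and you correctly identify the bookkeeping needed for the sharp logarithmic exponent and the dependence on~$A$ as the only remaining work.
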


In~\cite{S}, the constant $C$ in Proposition~\ref{p.crazyerdosshit} is allowed to depend on $(m,\theta)$ and the dependence on $A$ is not made explicit. However, an inspection of the proofs there and of~\cite[Theorem 1.80]{DT}  (using in particular the exponential dependence in $m$ of the constant in the Erd\"os-Turan-Koksma inequality) one arrives at the result as stated above. 

\smallskip

As a consequence of~\eqref{e.fantastic} and Proposition~\ref{p.crazyerdosshit}, if the lifted function $F$ belongs to $C^k(\Rd)$ and $M \in \mathcal{D}(A,\theta)$, then we obtain the estimate 
\begin{equation*} \label{}
\rho_k(f,R) \leq C\left( A,k, \left\| F \right\|_{C^k(\R^m)} \right) R^{-\frac{k}{m(\theta+1)}} \left( \log R\right)^{\frac{k(m-1)}m}.
\end{equation*}
Thus if $F\in C^n(\R^m)$ for some $n> m(\theta+1)$, we obtain, for any $\delta < \delta_0:= n/m(\theta+1) -1>0$,
\begin{equation*}
\rho_*(f,R) \leq C^n n! \rho_n(f,R) \leq  C\!\left(n,\| F \|_{C^n(\R^m)} \right) R^{-1-\delta}.
\end{equation*}
This confirms~\eqref{e.ergodicassump} for Kozlov's class of quasiperiodic coefficients and gives a more precise estimate of the degree of smoothness required for $F$. Obviously~\eqref{e.fantastic} and~\eqref{e.sigmabound} for $n=1$ also give~\eqref{e.technicalassump}. If in fact $F\in C^\infty(\Rd)$, then we obtain that $\rho_*(f,\cdot)$ decays faster than any power: for every $s\geq 1$, 
\begin{equation*} \label{}
\rho_*(f,R) \leq C\left( s, \left\| F \right\|_{C^{\lceil sm(\theta+1) \rceil}(\R^m)} \right) R^{-s}.
\end{equation*}

\smallskip

We next generalize the above discussion to the case $m=\infty$ and thereby obtain conditions on which almost periodic coefficient fields, which are not necessarily quasiperiodic, may satisfy~\eqref{e.ergodicassump} and~\eqref{e.technicalassump}. Recall (see~\cite{B}) that a general uniformly almost periodic function $f:\Rd \to \R$ may be written in the form
\begin{equation*} \label{}
f(x) = F(M(x))
\end{equation*}
for a continuous function $F:\R^\infty \to \R$ which is $1$--periodic in each entry and~$M:\Rd \to \R^\infty$ is a linear map. We identify $\R^\infty$ with the set of sequences of real numbers indexed by $\N$. For each $m\in\N$, we let $P_m:\R^\infty \to \R^m$ denote the projection onto the first $m$ terms of the sequence. We measure the dependence of $F$ in terms beginning with the $(m+1)$th term by setting
\begin{equation*} \label{}
\chi_m(F):= \sup_{x\in\R^\infty} \left| F(x) - F(P_mx) \right|. 
\end{equation*}
Note that $f$ is quasiperiodic if and only if $\chi_m(F) = 0$ for some finite $m$, so we can think of $\chi_m(F)$ as a measure of how well $f$ can be approximated by quasiperiodic functions. 

\smallskip

We now suppose that $F\in C^\infty(\R^\infty)$ with all derivatives bounded uniformly. We suppose that, for every $m\in\N$, there exists $\theta_m>0$ and $A_m>0$ such that $P_mM\in \mathcal D(A_m,\theta_m)$. It follows then from the above analysis that
\begin{equation} \label{e.rhokap}
\rho_k(f,R) \leq \inf_{m\in\N} \left( \chi_m(F) +  C A_m^{-\frac km} R^{\frac k{m(\theta_m+1)}} \left( \log R \right)^{\frac {k(m-1)}{m}} \right). 
\end{equation}
This leads to the estimate
\begin{align*}
\rho_*(f,R) \leq \inf_{k\in\N}\left( C^k k! \inf_{m\in\N} \left( \chi_m(F) +  C A_m^{-\frac km} R^{\frac k{m(\theta_m+1)}} \left( \log R \right)^{\frac {k(m-1)}{m}} \right) \right).
\end{align*}
Therefore a sufficient condition for~\eqref{e.ergodicassump} is
\begin{equation}
\label{e.almostperiodiccondition}
\inf_{k\in\N}\left( C^k k! \inf_{m\in\N} \left( \chi_m(F) +  C A_m^{-\frac km} R^{\frac k{m(\theta_m+1)}} \left( \log R \right)^{\frac {k(m-1)}{m}} \right) \right) \leq CR^{-1-\delta}.
\end{equation}
This may not seem very explicit, but one sees a very natural interplay between finite dimensional (quasiperiodic) approximation and the nonresonance conditions. One can give more explicit sufficient conditions for~\eqref{e.ergodicassump} by showing that~\eqref{e.almostperiodiccondition} is implied by decay conditions on the  coefficients in the Fourier series for $f$. (Recall that an arbitrary almost periodic function admits a Fourier series, see~\cite{B}.) We leave this as an exercise. One obtains an analogous condition for~\eqref{e.technicalassump} from~\eqref{e.rhokap} with $k=1$.
%
%
Therefore, it seems to be natural to use $\rho_*(\a,\cdot)$. In particular, the arguments in this paper actually give the first algebraic rates of convergence in homogenization for almost periodic coefficients which do not belong to Kozlov's quasiperiodic class.

\section*{Acknowledgements}
The second author acknowledges financial support from the European Research Council under
the European Community's Seventh Framework Programme (FP7/2014-2019 Grant Agreement
QUANTHOM 335410). The third author was supported by the Academy of Finland project \#258000.

\bibliographystyle{plain}
\bibliography{almostperiodic}

\end{document}